\newcommand{\leqnomode}{\tagsleft@true}
\newcommand{\reqnomode}{\tagsleft@false}
\numberwithin{equation}{section}
\newtheorem{thm}{Theorem}[section]
\newtheorem{lem}[thm]{Lemma}
\newtheorem{Prop}[thm]{Proposition}
\newcommand{\R}{\mathbb{R}}
\title[Nondegeneracy of solutions for a critical Hartree type equation]{Nondegeneracy of solutions for a critical Hartree equation}
\author[J. Giacomoni]{Jacques Giacomoni}
\address[J. Giacomoni]{LMAP, UMR E2S-UPPA CNRS 5142, B\^atiment IPRA, Avenue de l'Universit\'e F-64013 Pau, France}
\email{\href{mailto:email}{jacques.giacomoni@univ-pau.fr}}
\author[Y. Wei]{Yuanhong Wei}
\address[Yuanhong Wei]{Department of mathematics, Jilin University,
	\newline\indent
Changchun, Jilin, P.R. China}
\email{\href{mailto:email}{yhwei@amss.ac.cn}}
\author[Minbo Yang]{Minbo Yang}
\address[ Minbo Yang]{Department of mathematics, Zhejiang Normal University,
	\newline\indent
Jinhua, Zhejiang, P.R. China}
\email{\href{mailto:email}{mbyang@zjnu.edu.cn}}
\thanks{Corresponding author: Minbo Yang}
\thanks{Yuanhong Wei was partially supported by NSFC (11871242).}
\thanks{Minbo Yang was partially supported by NSFC (11571317, 11971634).}
\subjclass[2010]{35A15, 35B33, 35J20.}
\keywords{Hartree equations; Critical growth; Nondegeneracy.}
\begin{document}

\begin{abstract}
The aim of this paper is to prove the nondegeneracy of the unique positive solutions for the following critical Hartree type equations when $\mu>0$ is close to $0$,
$$
-\Delta u=\left(I_{\mu}\ast u^{2^{\ast}_{\mu}}\right)u^{{2}^{\ast}_{\mu}-1},~~x\in\mathbb{R}^{N},
$$
where $
I_{\mu}(x)=\frac{\Gamma(\frac{\mu}{2})}{\Gamma(\frac{{N-\mu}}{2})\pi^{\frac{N}{2}}2^{{N-\mu}}|x|^{\mu}}
$ is the Riesz potential and $2^{\ast}_{\mu}=\frac{2{N-\mu}}{N-2}$ is the upper critical exponent due to the Hardy-Littlewood-Sobolev inequality.
\end{abstract}

\maketitle
	

\section{Introduction}	

We are interested in proving the nondegeneracy of the unique positive solutions of the critical Hartree type equation
\begin{equation}\label{cc}
-\Delta u=\left(I_{\mu}\ast u^{2^{\ast}_{\mu}}\right)u^{{2}^{\ast}_{\mu}-1},~~x\in\mathbb{R}^{N},
\end{equation}
where $\mu>0$ is close to $0$, $2^{\ast}_{\mu}=\frac{2{N-\mu}}{N-2}$, $I_{\mu}$ is the Riesz potential defined by
$$
I_{\mu}(x)=\frac{\Gamma(\frac{\mu}{2})}{\Gamma(\frac{{N-\mu}}{2})\pi^{\frac{N}{2}}2^{{N-\mu}}|x|^{\mu}}
$$
with $\Gamma(s)=\displaystyle\int^{+\infty}_{0}x^{s-1}e^{-x}dx$, $s>0$ \Big(In some references the Riesz potential is defined by $
I_{\alpha}(x)=\frac{\Gamma(\frac{N-\alpha}{2})}{\Gamma(\frac{\alpha}{2})\pi^{\frac{N}{2}}2^{\alpha}|x|^{N-\alpha}}
$, $0<\alpha<N$\Big).

 To understand the critical growth for equation \eqref{cc}, we need to recall the well-known Hardy-Littlewood-Sobolev (HLS for short) inequality, see \cite{LE1, LL}.
\begin{Prop}\label{HLS}
 Let $t,r>1$ and $0<\mu<N$ satisfying $1/t+1/r+\mu/N=2$, $f\in L^{t}(\mathbb{R}^N)$ and $h\in L^{r}(\mathbb{R}^N)$. There exists a sharp constant $C(N,\mu,t,r)$, independent of $f,h$, such that
\begin{equation}\label{HLS1}
\int_{\mathbb{R}^{N}}\int_{\mathbb{R}^{N}}\frac{f(x)h(y)}{|x-y|^{\mu}}dxdy\leq C(N,\mu,t,r) |f|_{t}|h|_{r}.
\end{equation}
If $t=r=2N/(2{N-\mu})$, then
$$
C(N,\mu,t,r)=C(N,\mu)=\pi^{\frac{\mu}{2}}\frac{\Gamma(\frac{{N-\mu}}{2})}{\Gamma(N-\frac{\mu}{2})}\left\{\frac{\Gamma(N)}{\Gamma(\frac{N}{2})}\right\}^{\frac{{N-\mu}}{N}}
$$
and there is equality in \eqref{HLS1} if and only if $f\equiv(const.)h$ and
$$
h(x)=A(\gamma^{2}+|x-a|^{2})^{-(2{N-\mu})/2}
$$
for some $A\in \mathbb{C}$, $0\neq\gamma\in\mathbb{R}$ and $a\in \mathbb{R}^{N}$.
\end{Prop}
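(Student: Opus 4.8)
The plan is to separate the statement into two layers: the boundedness of the bilinear form for all admissible exponents $t,r$ with a finite (not necessarily sharp) constant, and then the finer assertion in the conformal case $t=r=2N/(2{N-\mu})$, where one must exhibit the optimal constant $C(N,\mu)$ together with the complete family of maximizers.

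For the boundedness I would rewrite the left-hand side as the dual pairing $\int_{\R^N}\big(|\cdot|^{-\mu}\ast h\big)\,f\,\ud x$ and reduce, by duality, to the fractional-integration estimate $\big|\,|x|^{-\mu}\ast h\,\big|_{t'}\le C\,|h|_r$, where $t'$ is the conjugate exponent of $t$ and the balance condition $1/t+1/r+\mu/N=2$ is exactly $1/t'=1/r-(N-\mu)/N$. This is the classical Hardy--Littlewood--Sobolev mapping property of the Riesz kernel. I would prove it by splitting the kernel as $|x|^{-\mu}=|x|^{-\mu}\mathbf 1_{\{|x|\le R\}}+|x|^{-\mu}\mathbf 1_{\{|x|>R\}}$, estimating the near part by Young's inequality and the far part by H\"older, optimizing the cutoff $R=R(x)$ to obtain a weak-type $(r,t')$ bound, and finally upgrading it to the strong-type bound by Marcinkiewicz interpolation. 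This yields \eqref{HLS1} with a finite constant over the whole admissible range.

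For the sharp constant I would follow the rearrangement and conformal symmetry strategy of Lieb. Since the Riesz kernel $|x|^{-\mu}$ is positive-definite (its Fourier transform is a positive multiple of $|\xi|^{-(N-\mu)}$), Cauchy--Schwarz gives $B(f,h)\le B(f,f)^{1/2}B(h,h)^{1/2}$ for the bilinear form $B$, so in the case $t=r$ it suffices to maximize $B(f,f)$ over nonnegative $f$ with $|f|_t=1$ (for complex data the triangle inequality $|B(f,h)|\le B(|f|,|h|)$ reduces to $|f|,|h|$, which later accounts for the constant phase $A\in\C$). Next, by the Riesz rearrangement inequality the functional does not decrease when $f$ is replaced by its symmetric decreasing rearrangement $f^{*}$, while $|f^{*}|_t=|f|_t$; hence one may restrict to radially symmetric, nonincreasing profiles. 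The crucial point is the conformal invariance: under stereographic projection onto $S^N$ the functional with the critical exponent $t=2N/(2{N-\mu})$ is invariant under the full conformal (M\"obius) group. I would exploit this via the competing-symmetries argument of Carlen and Loss: the symmetric decreasing rearrangement and a suitably chosen conformal transformation have as their only common fixed point (modulo the symmetry group) the standard bubble, and iterating their composition produces a maximizing sequence converging to it. This forces every maximizer to be of the form $h(x)=A(\gamma^{2}+|x-a|^{2})^{-(2{N-\mu})/2}$, and tracing equality through the Riesz inequality gives the stated rigidity. Finally, inserting this explicit $h$ and evaluating $\int\int h(x)|x-y|^{-\mu}h(y)$ and $|h|_t$ in closed form (via the Beta/Gamma evaluation of $\int(1+|x|^2)^{-s}\,\ud x$ and the self-convolution of powers of $(1+|x|^2)$) produces the displayed value of $C(N,\mu)$.

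The main obstacle is the sharp-constant step, specifically the passage from \emph{radial decreasing maximizers} to \emph{exactly the conformal bubbles}. The rearrangement inequality alone only confines maximizers to radial decreasing profiles and does not identify the profile; it is the interplay between rearrangement, which strictly increases the functional away from the bubble, and conformal invariance, under which the functional is constant, that pins the optimizer down. Making the competing-symmetries iteration converge, and verifying that the only function fixed by both operations is the claimed bubble, is the delicate part; the subsequent Gamma-function bookkeeping needed to extract the precise constant $C(N,\mu)$ is routine but must be carried out carefully to match the normalization of the Riesz potential adopted here.
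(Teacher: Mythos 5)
There is nothing to compare against inside the paper: Proposition \ref{HLS} is not proved there at all, but is recalled as a classical result and attributed to Lieb \cite{LE1} and Lieb--Loss \cite{LL}. Your proposal is, in effect, a reconstruction of the proofs in those references, and as an outline it is correct. The duality reduction (the balance condition $1/t+1/r+\mu/N=2$ indeed gives $1/t'=1/r-(N-\mu)/N$), the kernel splitting into near and far parts, the weak-type bound, and Marcinkiewicz interpolation is the standard route to the non-sharp inequality (this is Stein's argument, cf.\ \cite{St}, which the paper itself cites elsewhere for the Riesz-potential form). The reduction to the diagonal case $f=h$ via positive-definiteness of the kernel, Riesz rearrangement, and conformal invariance combined with the Carlen--Loss competing-symmetries iteration is exactly the established path to the sharp constant $C(N,\mu)$ and the bubble maximizers. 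What this buys is an actual proof rather than a citation; what it costs is that each pillar you invoke (the Riesz rearrangement inequality with its equality cases, Marcinkiewicz interpolation, the convergence theorem for competing symmetries) is itself a substantial theorem, so a full execution would amount to reproducing Lieb's paper --- which is why the authors simply quote the result.

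One point where your sketch is genuinely thin, and which you should not present as routine: the ``only if'' direction of the equality statement. Competing symmetries produces the sharp constant and exhibits the bubbles $h(x)=A(\gamma^{2}+|x-a|^{2})^{-(2N-\mu)/2}$ as maximizers, but it does not by itself classify \emph{all} equality cases in \eqref{HLS1}. For that one needs, in addition, the strict form of the Riesz rearrangement inequality (equality forces the functions to be translates of symmetric decreasing profiles) together with an analysis of the maximizers, e.g.\ via the Euler--Lagrange equation or Lieb's argument on the sphere; and the relation $f\equiv(const.)h$ requires the equality case of Cauchy--Schwarz for the strictly positive-definite form. These steps are in \cite{LE1} but are genuinely delicate, and your plan only gestures at them.
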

On one hand, related to the study of the HLS inequality, Lieb \cite{LE1} classified all the maximizers of the HLS functional under constraints and obtained the best constant, then he posed the classification of the positive solutions of
\begin{equation}\label{Int}
u(x)=\int_{\mathbb{R}^{N}}\frac{u(y)^{\frac{N+\alpha}{N-\alpha}}}{|x-y|^{N-\alpha}}dy,~~x\in\mathbb{R}^{N},
\end{equation}
as an open problem. Chen, Li and Ou \cite{CLO1} developed the method of moving planes in integral forms to prove that any critical points of the functional was radially symmetric and assumed the unique form and gave a positive answer to Lieb's open problem.
In fact, equation \eqref{Int} is an equivalent form for the fractional equation
\begin{equation}\label{Frac}
(-\Delta)^{\frac{\alpha}{2}}u=u^{\frac{N+\alpha}{N-\alpha}},~~x\in\mathbb{R}^{N}.
\end{equation}
When $N\geq3$, $\alpha=2$ and $s=0$, equation \eqref{Frac} goes back to
\begin{equation}\label{lcritical}
-\Delta u=u^{\frac{N+2}{N-2}},~~x\in\mathbb{R}^{N}.
\end{equation}
Equation \eqref{lcritical} is in fact the Euler-Lagrange equation
of the extremal functions of the Sobolev inequality.  Caffarelli, Gidas and Spruck \cite{CGS} proved the symmetry and uniqueness of the positive solutions respectively. Chen and Li \cite{CL}, Li \cite{LC} simplified the results above as an application of the moving plane method. Li \cite{Li} used moving sphere method. The classification of the solutions of equation \eqref{lcritical} plays an important role in the Yamabe problem, the prescribed scalar curvature problem on Riemannian manifolds and the priori estimates in nonlinear equations. It is well known that, Aubin \cite{A}, Talenti \cite{Ta} proved that the best Sobolev constant $S$
 can be achieved by a two-parameter solutions of the form
\begin{equation}\label{U0}
 U_0(x):=[N(N-2)]^{\frac{N-2}{4}}\Big(\frac{t}{t^2+|x-\xi|^{2}}\Big)^{\frac{N-2}{2}}.
 \end{equation}
Furthermore, equation \eqref{lcritical}
has an $(N+1)$-dimensional manifold of solutions given by
$$
\mathcal{Z}=\left\{z_{t,\xi}(x)=[N(N-2)]^{\frac{N-2}{4}}\Big(\frac{t}{t^2+|x-\xi|^{2}}\Big)^{\frac{N-2}{2}},
\xi\in\mathbb{R}^{N}, t\in\mathbb{R}^{+}\right\}.
$$
It was proved in \cite{R} that $Z\in \mathcal{Z}$ is said to be nondegenerate in the sense that the linearized equation
\begin{equation}\label{Linearized}
-\Delta v=Z^{\frac{4}{N-2}}v,~~x\in\mathbb{R}^{N},
\end{equation}
in $D^{1,2}(\mathbb{R}^N)$ only admits solutions of the form
$$
\eta=aD_{t}Z+\mathbf{b}\cdot\nabla Z,
$$
where $a\in\mathbb{R},\mathbf{b}\in\mathbb{R}^{N}$.

On the other hand, if we consider the combination of the HLS inequality and the Sobolev inequality, for every $u\in H^{1}(\mathbb{R}^{N})$, the integral
$$
\int_{\mathbb{R}^{N}}\int_{\mathbb{R}^{N}}\frac{|u(x)|^{p}|v(y)|^{p}}{|x-y|^{\mu}}dxdy
$$
is well defined if
$$
\frac{2N-\mu}{N}\leq p\leq\frac{2N-\mu}{N-2}.
$$
So, we call $2_{\mu}^{\ast}:=\frac{2N-\mu}{N-2}$ is the upper critical exponent due to the Hardy--Littlewood--Sobolev inequality.
To study the best constant for the critical imbedding, we may study the following minimizing problem
\begin{equation}\label{M}
S_{HL}=\inf_{u\in D^{1,2}(\mathbb{R}^{N})\backslash\{0\}}\frac{\displaystyle\int_{\mathbb{R}^{N}}|\nabla u|^{2}dx}{\left(\displaystyle\int_{\mathbb{R}^{N}}(I_{\mu}\ast |u|^{2^{\ast}_{\mu}})|u|^{2^{\ast}_{\mu}}dy\right)^{\frac{1}{2^{\ast}_{\mu}}}}.
\end{equation}
Obviously, equation \eqref{cc} is the Euler-Lagrange equation for minimizing problem \eqref{M}.

There are also some uniqueness and nondegeneracy results for the subcritical Choquard equation
$$
-\Delta u+u=\left(I_{\mu}\ast u^{p}\right)u^{p-1},~~x\in\mathbb{R}^{N}.
$$
In \cite{Len, WW} the authors proved the uniqueness and nondegeneracy for the case $N=3, \mu=1$ and $p=2$. Xiang \cite{X} generalized the results a little by showing the nondegeneracy when $p>2$ is close $2$. In \cite{SJ}, J. Seok proved the limit profile of the ground states and he also proved the uniqueness and nondegeneracy results if $\mu$ close to $0$ or $N$. The methods in \cite{SJ} depends a lot on the functional analysis techniques and the embedding properties for $H^{1}(\R^N)$. However, to study the nondegeneracy of the unique positive solutions for the critical equation \eqref{cc} is not so easy, because we have only continuous embedding from $D^{1,2}(\R^N)$ into $L^{2^*}(\R^N)$. For the study of Choquard equation, we may refer the authors turn to \cite{MS1}.

Notice that, by the convergence property of the Riesz potential that $I_{\mu}\to \delta_x$ as $\mu\to N$,  see \cite{Lan}, we also find that equation \eqref{cc} goes to equation \eqref{lcritical} as $\mu\to N$. Thus equation \eqref{lcritical} can also be treated as the limit equation \eqref{cc}. In this sense, we may consider equation \eqref{cc} as a generalization of equation \eqref{lcritical} from a nonlocal point of view. The uniqueness of the positive solutions are recently proved by \cite{DY, Lei, Liu} separately by using moving plane methods.
Recently, using the classification results in \cite{DY}, we can also find some strong converging property for the ground states when the parameter $\mu$ approaches the $0$ or $N$. The nondegeneracy of the ground states for the critical Hartree equation plays an important role in studying semiclassical problems for the critical Choquard equation. As far as we know, the first nondegeneracy result for the critical Hartree equation is due to Du and Yang in \cite{DY}, where the authors proved the  nondegeneracy of $U_\mu$ as $\mu$ close to $N$.
\begin{lem}\label{TH3} \cite{DY}
Assume that $0<\mu<N$, $N=3\ \hbox{or} \ 4$. If $\mu$ is close to $N$ then the linearized equation at the unique positive solution $U_\mu$ \begin{equation}\label{Linearized E}
-\Delta\psi-2^{\ast}_{\mu}(I_{\mu}\ast(U_\mu^{2^{\ast}_{\mu}-1}\psi))U_\mu^{{2}^{\ast}_{\mu}-1}-(2^{\ast}_{\mu}-1)(I_{\mu}\ast U_\mu^{2^{\ast}_{\mu}})U_\mu^{{2}^{\ast}_{\mu}-2}\psi=0,~~x\in\mathbb{R}^{N},
\end{equation}
only admits solutions in $D^{1,2}(\mathbb{R}^{N})$ of the form
$$
\psi=aD_{t}{U}_\mu+\mathbf{b}\cdot\nabla {U}_\mu,
$$
where $a\in\mathbb{R},\mathbf{b}\in\mathbb{R}^{N}$.
\end{lem}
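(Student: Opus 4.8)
The plan is to argue by contradiction, regarding \eqref{cc} as a perturbation of the local critical equation \eqref{lcritical} in the regime $\mu\to N$ and invoking the nondegeneracy \eqref{Linearized} of the Aubin--Talenti bubble recorded in \cite{R}. The starting point is the explicit shape of the extremal: by the classification in \cite{DY} the unique positive solution is, up to translation and dilation, a fixed multiple $U_\mu=C(N,\mu)\,W$ of the standard profile $W(x)=\big(1+|x|^2\big)^{-(N-2)/2}$, with $C(N,\mu)\to C(N,N)$ as $\mu\to N$; hence $U_\mu\to U_0$, a positive solution of \eqref{lcritical}, together with all its derivatives, locally uniformly and in the relevant Lebesgue norms. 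Since $I_\mu\to\delta_x$ as $\mu\to N$ (see \cite{Lan}), the two nonlocal terms in \eqref{Linearized E} collapse, $I_\mu\ast(U_\mu^{2^{\ast}_\mu-1}\psi)\to U_0^{2/(N-2)}\psi$ and $I_\mu\ast U_\mu^{2^{\ast}_\mu}\to U_0^{N/(N-2)}$, while the coefficients $2^{\ast}_\mu$ and $2^{\ast}_\mu-1$ tend to $N/(N-2)$ and $2/(N-2)$; their sum reconstitutes exactly the factor $\tfrac{N+2}{N-2}U_0^{4/(N-2)}$, so the formal limit of the operator in \eqref{Linearized E} is the linearization of \eqref{lcritical} at $U_0$, which is nondegenerate by \cite{R}.

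To make this rigorous, suppose the conclusion fails. Then there exist $\mu_n\to N$ and $\psi_n\in D^{1,2}(\mathbb{R}^N)$ solving \eqref{Linearized E} at $U_{\mu_n}$, orthogonal in $D^{1,2}(\mathbb{R}^N)$ to $K_n:=\mathrm{span}\{D_tU_{\mu_n},\partial_{x_1}U_{\mu_n},\dots,\partial_{x_N}U_{\mu_n}\}$ and normalized by $\|\psi_n\|_{D^{1,2}}=1$. Up to a subsequence $\psi_n\rightharpoonup\psi_0$ in $D^{1,2}(\mathbb{R}^N)$ and $\psi_n\to\psi_0$ in $L^2_{\mathrm{loc}}(\mathbb{R}^N)$. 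Testing \eqref{Linearized E} against a fixed $\varphi\in C^\infty_c(\mathbb{R}^N)$ and exploiting the compact support of the weights together with the convolution convergence above, I pass to the limit in the nonlocal terms and obtain that $\psi_0$ solves the limit equation \eqref{Linearized}; by the nondegeneracy of $U_0$ from \cite{R}, $\psi_0\in\mathrm{span}\{D_tU_0,\nabla U_0\}$. On the other hand the orthogonality conditions pass to the limit, because $D_tU_{\mu_n}\to D_tU_0$ and $\partial_{x_i}U_{\mu_n}\to\partial_{x_i}U_0$ strongly in $D^{1,2}(\mathbb{R}^N)$ while $\psi_n\rightharpoonup\psi_0$; hence $\psi_0\perp\mathrm{span}\{D_tU_0,\nabla U_0\}$ and therefore $\psi_0=0$.

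The final step is to upgrade $\psi_n\rightharpoonup0$ to $\|\psi_n\|_{D^{1,2}}\to0$, which contradicts the normalization. Here I use the equation itself rather than a coercivity estimate, since the limit operator possesses a negative direction: testing \eqref{Linearized E} against $\psi_n$ yields
\begin{align*}
\int_{\mathbb{R}^N}|\nabla\psi_n|^2\,dx
&=2^{\ast}_{\mu_n}\int_{\mathbb{R}^N}\big(I_{\mu_n}\ast(U_{\mu_n}^{2^{\ast}_{\mu_n}-1}\psi_n)\big)U_{\mu_n}^{2^{\ast}_{\mu_n}-1}\psi_n\,dx\\
&\quad+(2^{\ast}_{\mu_n}-1)\int_{\mathbb{R}^N}\big(I_{\mu_n}\ast U_{\mu_n}^{2^{\ast}_{\mu_n}}\big)U_{\mu_n}^{2^{\ast}_{\mu_n}-2}\psi_n^2\,dx.
\end{align*}
Both terms on the right are shown to vanish by splitting the integrals over a large ball $B_R$ and its complement. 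On $B_R$ the weights built from $U_{\mu_n}$ are uniformly bounded and $\psi_n\to0$ in $L^r_{\mathrm{loc}}(\mathbb{R}^N)$ for every $r<2^{\ast}$, so the inner contribution tends to $0$; on $\mathbb{R}^N\setminus B_R$ the weight decays like $|x|^{-4}$ (the exponents converging to their values at $\mu=N$), so by the Hardy--Littlewood--Sobolev inequality of Proposition \ref{HLS} and H\"older's inequality the outer contribution is bounded by $CR^{-2}\|\psi_n\|_{D^{1,2}}^2=CR^{-2}$, uniformly in $n$. Letting $n\to\infty$ and then $R\to\infty$ forces $\int_{\mathbb{R}^N}|\nabla\psi_n|^2\,dx\to0$, the desired contradiction.

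The main obstacle is the uniform control, as $\mu_n\to N$, of the nonlocal convolution, which enters twice: first in passing to the limit to identify \eqref{Linearized}, where one needs a quantitative form of $I_\mu\to\delta_x$ to absorb the simultaneous drift of the kernel and of the critical exponents $2^{\ast}_{\mu_n}$; and again in the tail estimate above, where the criticality of the nonlinearity leaves no slack and the decay of the bubble weight must be matched against the Hardy--Littlewood--Sobolev exponent $r=2N/(2N-\mu_n)$ uniformly in $n$. The dimensional restriction $N\in\{3,4\}$ enters precisely here: it guarantees $2^{\ast}_{\mu_n}-2\ge0$ for $\mu_n$ near $N$, so that the weight $U_{\mu_n}^{2^{\ast}_{\mu_n}-2}$ in the second integral does not grow at infinity and the tail estimate survives. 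Keeping all these estimates uniform in $\mu_n$, together with the strong convergence $U_{\mu_n}\to U_0$ of the profile and its derivatives in $D^{1,2}(\mathbb{R}^N)$, is the technical crux of the argument.
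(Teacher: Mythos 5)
First, a point of comparison: the paper does not prove this lemma at all --- it is quoted verbatim from \cite{DY}, so there is no internal proof to measure your attempt against. The closest analogue inside the paper is the proof of Theorem \ref{thm: nondegeneracy}, which runs the same compactness scheme in the opposite regime $\mu\to 0$, where the kernel $|x|^{-\mu}$ tends to the constant $1$ and the limit linearization is the nonlocal operator $L_1$ of Section 2, rather than Rey's local operator from \eqref{Linearized}. Your overall strategy --- contradiction, normalization, weak limit, identification of the limit equation as \eqref{Linearized}, orthogonality passing to the limit, then upgrading weak to strong convergence through the equation itself --- is the natural one and is structurally parallel both to \cite{DY} and to the paper's $\mu\to 0$ argument. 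But as written it has genuine gaps, concentrated exactly at the points you yourself flag as ``the technical crux'' without resolving them.

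The first gap is uniformity in $n$ of the nonlocal estimates. The sharp constant in Proposition \ref{HLS} for the bare kernel $|x-y|^{-\mu}$ blows up as $\mu\to N$, since $\Gamma(\frac{N-\mu}{2})\to\infty$; it is only the normalizing factor in $I_{\mu}$, which vanishes at the compensating rate $\sim (N-\mu)$, that makes HLS-type bounds for $I_{\mu}$ uniform as $\mu_n\to N$. You invoke ``the Hardy--Littlewood--Sobolev inequality of Proposition \ref{HLS} \ldots uniformly in $n$'' without ever making this distinction, so neither the tail estimate nor the passage to the limit is justified as stated. Likewise, the ``quantitative form of $I_\mu\to\delta_x$'' needed to pass to the limit in the nonlocal quadratic form along a merely \emph{weakly} convergent sequence $\psi_n$ (the $L^\infty$ and approximate-identity estimates, cf.\ Lemma 2.6 in \cite{DY} and Proposition 2.5 in \cite{SJ}, which the paper itself has to invoke even in the easier $\mu\to 0$ case) is precisely the mathematical content of the lemma; asserting that it exists is not supplying it. A small instance of the same problem appears earlier: you speak of ``the compact support of the weights,'' but the weights $U_{\mu_n}^{2^{\ast}_{\mu_n}-1}$ are not compactly supported --- only the test function $\varphi$ is.

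The second gap is that the ball-splitting in your final step mishandles the nonlocality. On $B_R$, the factor $I_{\mu_n}\ast(U_{\mu_n}^{2^{\ast}_{\mu_n}-1}\psi_n)$ still involves $\psi_n$ on all of $\mathbb{R}^N$, so $\psi_n\to 0$ in $L^r_{\mathrm{loc}}$ does not by itself make the inner contribution vanish; one needs an additional near/far decomposition of the convolution itself (near part controlled by the uniformly bounded $L^1$-mass of $I_{\mu_n}$ on balls together with Rellich, far part by the vanishing normalization constant of $I_{\mu_n}$ against a weighted H\"older bound). On $\mathbb{R}^N\setminus B_R$, only the second term carries a pointwise weight $\approx|x|^{-4}$; the first term's ``weight'' $I_{\mu_n}\ast(U_{\mu_n}^{2^{\ast}_{\mu_n}-1}\psi_n)$ contains $\psi_n$ itself, and a uniform pointwise bound on it via H\"older fails for $\mu_n$ near $N$ (the singularity exponent $\mu_n\cdot\frac{2N}{N+2}$ exceeds $N$ when $\mu_n>\frac{N+2}{2}$), so that tail must be handled by the bilinear HLS estimate with restricted norms, not by pointwise decay. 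These defects are fixable --- your identification of where the restriction $N\in\{3,4\}$ enters, via the sign of $2^{\ast}_{\mu}-2$, is correct --- but filling them is the actual proof, and what you have is a plausible outline rather than a proof.
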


In the present paper, we are going to prove the nondegeneracy of the unique positive solutions for the critical Hartree equations with $\mu>0$ close to $0$. Notice that the function $I_{\mu}$ blows up when $\mu\rightarrow0$ due to the fact that the term $\Gamma(\frac{\mu}{2})$ in the coefficient of $I_{\mu}$. And so, to get rid of this singular term, we take a scaling by
\begin{equation}\label{scaled Solutions}
	V_{\mu}=S(N,\mu)U_{\mu},
\end{equation}
where
\[
	 S(N,\mu):=\Big(\frac{\Gamma(\frac{\mu}{2})}{\Gamma(\frac{{N-\mu}}{2})\pi^{\frac{N}{2}2^{N-\mu}}}\Big)^{\frac{1}{{2}^{\ast}_{\mu}-2}}\sim \Big(\frac{1}{\mu}\Big)^{\frac{1}{{{2}^{\ast}_{\mu}-2}}} \mbox{ as } \mu \rightarrow 0.
\]
Then, to study the the nondegeneracy of $V_\mu$ for \eqref{cc} as $\mu$ close to $0$, one needs only to study the corresponding property of the solutions $V_\mu$ for
\begin{equation}\label{scaled Equa}
	-\Delta v=\Big(\int_{\mathbb{R}^N}\frac{v(y)^{{{2}^{\ast}_{\mu}}}}{|x-y|^{\mu}}dy \Big)v^{{{2}^{\ast}_{\mu}-1}},~~x\in\mathbb{R}^{N}.
\end{equation}

 In the following we recall some basic results about the best constant $S_{HL}$ defined in \eqref{M} and the existence of positive ground state solutions for \eqref{scaled Equa}. We have the following lemma taken from \cite{AGSY, DY, GY}.
\begin{lem}\label{lem1}
The best constant $S_{HL}$ defined in \eqref{M} satisfies
\begin{equation}\label{WSX}
S_{HL}=\frac{S}{C(N,\mu)^{\frac{N-2}{2{N-\mu}}}},
\end{equation}
where $S$ is the Sobolev constant. What's more,
$$
{V}_\mu(x)=S^{\frac{({N-\mu})(2-N)}{4({N-\mu}+2)}}C(N,\mu)^{\frac{2-N}{2({N-\mu}+2)}}U_0
$$
is the unique family of radial functions that achieves $S_{HL}$ and satisfies equation \eqref{scaled Equa}, where $U_0$ is defined in \eqref{U0}.
\end{lem}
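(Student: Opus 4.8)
The plan is to squeeze the Rayleigh quotient in \eqref{M} between the sharp Hardy--Littlewood--Sobolev inequality of Proposition~\ref{HLS} and the sharp Sobolev inequality, and then to observe that the Aubin--Talenti bubble $U_0$ saturates both at once. Writing the denominator energy as the Riesz bilinear form
\[
\mathcal{D}(u):=\int_{\R^N}\int_{\R^N}\frac{|u(x)|^{2^{\ast}_{\mu}}|u(y)|^{2^{\ast}_{\mu}}}{|x-y|^{\mu}}\,\ud x\,\ud y ,
\]
the kernel for which \eqref{scaled Equa} is the Euler--Lagrange equation, I would first apply \eqref{HLS1} with $f=h=|u|^{2^{\ast}_{\mu}}$ and $t=r=\frac{2N}{2N-\mu}$ (these satisfy $1/t+1/r+\mu/N=2$). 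The arithmetic identity $2^{\ast}_{\mu}\cdot\frac{2N}{2N-\mu}=2^{\ast}:=\frac{2N}{N-2}$ gives $\big|\,|u|^{2^{\ast}_{\mu}}\,\big|_{2N/(2N-\mu)}=|u|_{2^{\ast}}^{2^{\ast}_{\mu}}$, whence
\[
\mathcal{D}(u)\le C(N,\mu)\,|u|_{2^{\ast}}^{\,2\cdot 2^{\ast}_{\mu}} .
\]
Inserting the Sobolev inequality $S\,|u|_{2^{\ast}}^2\le|\nabla u|_2^2$ and taking the power $1/2^{\ast}_{\mu}$ yields $\mathcal{D}(u)^{1/2^{\ast}_{\mu}}\le C(N,\mu)^{1/2^{\ast}_{\mu}}S^{-1}|\nabla u|_2^2$, i.e.\ the lower bound $S_{HL}\ge S\,C(N,\mu)^{-(N-2)/(2N-\mu)}$, which is the asserted value.

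For sharpness and attainment I would check that $U_0$ turns both inequalities into equalities. Equality in the Sobolev step is exactly the Aubin--Talenti fact recalled around \eqref{U0}. For the HLS step the relevant identity is $\frac{N-2}{2}\cdot 2^{\ast}_{\mu}=\frac{2N-\mu}{2}$, so that
\[
U_0^{2^{\ast}_{\mu}}(x)\ \propto\ \big(t^2+|x-\xi|^2\big)^{-\frac{2N-\mu}{2}},
\]
which is precisely the profile singled out in the equality case of Proposition~\ref{HLS} with $f\equiv h$. Hence $U_0$ saturates the whole chain, the lower bound is attained, $S_{HL}=S\,C(N,\mu)^{-(N-2)/(2N-\mu)}$, and every scalar multiple of a translated/dilated bubble $z_{t,\xi}$ is a minimizer. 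Conversely, a minimizer must realize equality at both steps; the equality case of Proposition~\ref{HLS} forces $|u(x)|^{2^{\ast}_{\mu}}=A(\gamma^2+|x-a|^2)^{-(2N-\mu)/2}$, i.e.\ $|u|\propto(\gamma^2+|x-a|^2)^{-(N-2)/2}$, again a bubble, and restricting to functions radial about the origin leaves only the dilation parameter and an overall scalar.

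It remains to fix that scalar so that $V_\mu=s\,U_0$ solves the \emph{normalized} equation \eqref{scaled Equa} rather than the Euler--Lagrange equation up to a multiplier. Here I would use $-\Delta U_0=U_0^{2^{\ast}-1}$ together with the explicit Riesz potential of a bubble,
\[
\int_{\R^N}\frac{\big(t^2+|y|^2\big)^{-\frac{2N-\mu}{2}}}{|x-y|^{\mu}}\,\ud y=c_{N,\mu}\,\big(t^2+|x|^2\big)^{-\frac{\mu}{2}},
\]
where the power $-\mu/2$ is dictated by scaling and by the requirement that both sides of \eqref{scaled Equa} carry the same homogeneity, namely $\frac{N-2}{2}(2^{\ast}_{\mu}-1)+\frac{\mu}{2}=\frac{N+2}{2}$, matching $\frac{N-2}{2}(2^{\ast}-1)=\frac{N+2}{2}$ on the left. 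Substituting $V_\mu=sU_0$ then makes both sides of \eqref{scaled Equa} proportional to $(t^2+|x|^2)^{-(N+2)/2}$, and equating the constants collapses to a single algebraic relation of the form $s^{\,2(2^{\ast}_{\mu}-1)}=c_{N,\mu}^{-1}\cdot(\text{Sobolev normalization})$; since $2(2^{\ast}_{\mu}-1)=\frac{2(N-\mu+2)}{N-2}$, its unique positive solution is exactly the prefactor $S^{\frac{(N-\mu)(2-N)}{4(N-\mu+2)}}C(N,\mu)^{\frac{2-N}{2(N-\mu+2)}}$. The main obstacle is precisely this last computation: evaluating the constant $c_{N,\mu}$ in the Riesz convolution of the bubble and carrying the Gamma-function and $S$, $C(N,\mu)$ bookkeeping through until the scalar collapses to the stated closed form. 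The qualitative ingredients --- the two exponent identities and the simultaneous saturation of the HLS and Sobolev inequalities --- are comparatively soft.
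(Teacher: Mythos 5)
First, a point of reference: the paper itself gives no proof of Lemma \ref{lem1} --- it is quoted from \cite{AGSY, DY, GY} --- so your proposal can only be compared with the standard argument in those references. (Note also that \eqref{M} as printed carries the kernel $I_\mu$, but the formula \eqref{WSX} and the conclusion that $V_\mu$ solves \eqref{scaled Equa} show the intended quotient uses the plain kernel $|x|^{-\mu}$; your reading is the correct one.) Your first two steps coincide exactly with that standard argument: sandwiching the quotient between the sharp HLS inequality with $t=r=\frac{2N}{2N-\mu}$ (via $2^{\ast}_{\mu}\cdot\frac{2N}{2N-\mu}=2^{\ast}$) and the sharp Sobolev inequality, then observing that $U_0$ saturates both simultaneously because $U_0^{2^{\ast}_{\mu}}$ is precisely the HLS extremal profile; this gives \eqref{WSX}, attainment, and, via the HLS equality case, the converse classification of minimizers.

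Where you genuinely diverge is the determination of the scalar $s$ in $V_\mu=sU_0$, and this is also the one soft spot: you route it through the explicit Riesz potential of the bubble and leave the evaluation of $c_{N,\mu}$ (a Gamma-function computation, essentially Lieb's identity) unfinished, calling it the main obstacle. That computation can be bypassed entirely. Since $U_0$ minimizes the scale-invariant quotient, it satisfies the Euler--Lagrange equation with a multiplier,
\[
-\Delta U_0=\Lambda\Big(\frac{1}{|x|^{\mu}}\ast U_0^{2^{\ast}_{\mu}}\Big)U_0^{2^{\ast}_{\mu}-1},
\qquad
\Lambda=\frac{\|\nabla U_0\|_{2}^{2}}{\mathcal{D}(U_0)},
\]
and both quantities in $\Lambda$ are already known from the two equalities you established: from $-\Delta U_0=U_0^{2^{\ast}-1}$ and Sobolev equality, $\|\nabla U_0\|_{2}^{2}=\|U_0\|_{2^{\ast}}^{2^{\ast}}=S^{N/2}$; from HLS equality, $\mathcal{D}(U_0)=C(N,\mu)\,\|U_0\|_{2^{\ast}}^{2\cdot 2^{\ast}_{\mu}}=C(N,\mu)\,S^{\frac{2N-\mu}{2}}$. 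Hence $\Lambda=S^{\frac{\mu-N}{2}}C(N,\mu)^{-1}$, and $V_\mu=sU_0$ solves \eqref{scaled Equa} precisely when $s^{2(2^{\ast}_{\mu}-1)}=\Lambda$, i.e.
\[
s=\Lambda^{\frac{N-2}{2(N-\mu+2)}}=S^{\frac{(N-\mu)(2-N)}{4(N-\mu+2)}}C(N,\mu)^{\frac{2-N}{2(N-\mu+2)}},
\]
which is exactly the stated prefactor. So your approach is correct and would close once $c_{N,\mu}$ is evaluated, but the multiplier argument delivers the constant for free from the saturated inequalities, with no special-function bookkeeping; this is the cleaner route and the one implicit in the cited proofs.
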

This Lemma characterizes the relations among the best constant $S_{HL}$,  the Sobolev constant $S$ and the best constant $C(N,\mu)$ in the HLS inequality.

The main result of the present paper is about the nondegeneracy of $V_\mu$ as $\mu$ close to $0$. That is
\begin{thm}\label{thm: nondegeneracy}
	Let $\mu \in (0,N)$ sufficiently close to $0$ and $V_{\mu}$ be the corresponding family of unique positive radial
	solution of \eqref{scaled Equa}. Then the linearized equation
	of \eqref{scaled Equa} at $V_{\mu}$ , given by
	\begin{equation}\label{Le}
		-\Delta \phi -{{2}^{\ast}_{\mu}}(\frac{1}{|x|^{\mu}}\ast(V_{\mu}^{{{2}^{\ast}_{\mu}}-1}\phi))V_{\mu}^{{{2}^{\ast}_{\mu}}-1}-({{2}^{\ast}_{\mu}}-1)(\frac{1}{|x|^{\mu}}\ast V_{\mu}^{{2}^{\ast}_{\mu}})V_{\mu}^{{{2}^{\ast}_{\mu}}-2}\phi =0
	\end{equation}
only admits solutions in $D^{1,2}(\mathbb{R}^{N})$ of the form
	\[
		\phi=aD_{t}{V}_\mu+\mathbf{b}\cdot\nabla {V}_\mu
	\]
where $a\in\mathbb{R},\mathbf{b}\in\mathbb{R}^{N}$.
\end{thm}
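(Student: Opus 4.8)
The plan is to prove nondegeneracy for $\mu$ near $0$ by a perturbation argument anchored at the limit equation $\mu = 0$, where the equation $\eqref{scaled Equa}$ degenerates (formally) to the classical Sobolev equation $\eqref{lcritical}$, whose nondegeneracy is already known from $\eqref{R}$/Rey's result (only translations and dilations generate the kernel of $\eqref{Linearized}$). The key structural fact I would exploit is Lemma $\ref{lem1}$: the solution $V_\mu$ is, up to an explicit $\mu$-dependent scalar multiple, equal to the fixed bubble $U_0$. So after a further normalization one can arrange that the family $V_\mu$ converges (in $D^{1,2}$ and in suitable weighted/local norms) to a fixed bubble $U$ as $\mu \to 0$, and the linearized operator $L_\mu$ in $\eqref{Le}$ converges to the limit operator $L_0 \phi = -\Delta\phi - (N+2)/(N-2)\, U^{4/(N-2)}\phi$ appearing in $\eqref{Linearized}$.

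The main steps, in order, are as follows. First, reduce to radial and spherical-harmonic sectors: decompose any $\phi \in D^{1,2}(\mathbb{R}^N)$ in the kernel of $L_\mu$ into spherical harmonics $\phi = \sum_k \phi_k(r)Y_k(\theta)$, and observe that the nonlocal convolution term respects this decomposition because $V_\mu$ is radial, so $L_\mu$ acts sector by sector. The expected kernel $aD_t V_\mu + \mathbf{b}\cdot\nabla V_\mu$ lives entirely in the $k=0$ (the dilation mode $D_t V_\mu$) and $k=1$ (the $N$ translation modes $\partial_i V_\mu$) sectors. Second, for each fixed sector $k$ establish the convergence of the operators: I would show that as $\mu \to 0$ the nonlocal operators $\frac{1}{|x|^\mu}\ast(\cdot)$ converge to the identity (the kernel $|x|^{-\mu}$ approaches a constant, but the scaling absorbed into $V_\mu$ and the coefficients from Lemma $\ref{lem1}$ must be tracked carefully) so that $L_\mu \to L_0$ in an appropriate operator-norm sense on each sector. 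Third, invoke nondegeneracy of $L_0$: by Rey's theorem the kernel of $L_0$ consists exactly of the dilation mode (in $k=0$) and the $N$ translation modes (in $k=1$), and $L_0$ restricted to each higher sector $k \ge 2$ is invertible (coercive). A spectral/continuity argument then forces the kernel of $L_\mu$ for small $\mu$ to have dimension at most $N+1$ in the $k=0,1$ sectors and to be trivial in the sectors $k \ge 2$; since $aD_t V_\mu + \mathbf{b}\cdot\nabla V_\mu$ always supplies $N+1$ genuine kernel elements, the kernel is exactly this space.

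The two serious obstacles are these. The first is that the limit $\mu\to 0$ is singular at the level of the Riesz potential (as the excerpt notes, $\Gamma(\mu/2)$ blows up), so passing to the limit in $\eqref{Le}$ requires making precise, in a weighted functional setting, the statement that $\frac{1}{|x|^\mu}\ast f \to \int_{\mathbb{R}^N} f$ or $\to f$ after the correct rescaling. The decay of $U_0$ like $|x|^{-(N-2)}$ makes the convolution integrals only conditionally well-behaved, and I would need uniform-in-$\mu$ Hardy--Littlewood--Sobolev and weighted $L^p$ estimates to guarantee that the nonlocal term does not lose compactness or concentrate mass at infinity. The second, and I expect the harder, obstacle is the spectral-stability step: because the embedding $D^{1,2}(\mathbb{R}^N) \hookrightarrow L^{2^*}(\mathbb{R}^N)$ is only continuous and not compact (as the authors themselves flag), one cannot directly use compactness to conclude that eigenvalues of $L_\mu$ vary continuously and that no new kernel appears. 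My strategy to circumvent this is to work sector by sector with fixed angular momentum $k$, where after separation of variables the radial problems become effectively one-dimensional Schrödinger-type operators with a spectral gap, and to combine this with the explicit convergence $V_\mu \to U_0$ from Lemma $\ref{lem1}$ to get quantitative resolvent estimates; the coercivity in sectors $k\ge 2$ must be shown to survive the nonlocal perturbation uniformly as $\mu\to 0$.
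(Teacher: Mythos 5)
There is a genuine gap, and it is at the very foundation of your scheme: you have anchored the perturbation argument at the wrong limit problem. As $\mu\to 0$ the kernel $|x|^{-\mu}$ tends to the constant $1$, so the convolution $\frac{1}{|x|^{\mu}}\ast f$ converges to the \emph{constant function} $\int_{\mathbb{R}^N} f\,dx$, i.e.\ to a rank-one map, not to the identity. (The convergence of the Riesz potential to $\delta_x$, which would produce the local equation \eqref{lcritical}, is the regime $\mu\to N$, already treated in \cite{DY}; you have effectively interchanged the two regimes.) Consequently the formal limit of \eqref{scaled Equa} is not \eqref{lcritical} but the equation $-\Delta v=\big(\int_{\mathbb{R}^N}v^{2^{\ast}}dx\big)v^{2^{\ast}-1}$ (this is \eqref{Lim} in the paper), and the limit of the linearized operator in \eqref{Le} is not $L_0\phi=-\Delta\phi-(2^{\ast}-1)U^{4/(N-2)}\phi$ but
\[
L_1(\varphi)=-\Delta\varphi-2^{\ast}\Big(\int_{\mathbb{R}^N}V_0^{2^{\ast}-1}\varphi\,dx\Big)V_0^{2^{\ast}-1}-(2^{\ast}-1)\Big(\int_{\mathbb{R}^N}V_0^{2^{\ast}}dx\Big)V_0^{2^{\ast}-2}\varphi .
\]
The first nonlocal term in \eqref{Le} does \emph{not} disappear in the limit: testing with $\phi=V_\mu$ shows it converges to $2^{\ast}a_0V_0^{2^{\ast}-1}\neq 0$, where $a_0=\int V_0^{2^{\ast}}$. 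So your claimed operator convergence $L_\mu\to L_0$ is false, and every step downstream of it (the sector-by-sector resolvent estimates, the dimension count of the kernel) is comparing $L_\mu$ against an operator it does not approximate.

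The place where this actually bites is exactly the radial sector, where your argument cannot be repaired by cosmetic changes. For angular momentum $k\ge 1$ the rank-one term vanishes in the limit (since $\int V_0^{2^{\ast}-1}\phi_k Y_k\,dx=0$), so there your local reference operator is the right one; but in the $k=0$ sector, which contains the dilation mode $D_tV_0$, the limit operator is $L_1$ and one must rule out that the rank-one perturbation enlarges the radial kernel beyond $\mathrm{span}\{D_tV_0\}$. That is the crux, and it is precisely what the paper's Section 2 does: if $L_1\varphi=0$ then $L_2\varphi=\lambda V_0^{2^{\ast}-1}$ with $\lambda=2^{\ast}\int V_0^{2^{\ast}-1}\varphi\,dx$, where $L_2$ is the classical (Rey) linearization; since $L_2\big(\tfrac{\lambda}{(2-2^{\ast})a_0}V_0\big)=\lambda V_0^{2^{\ast}-1}$, the difference $\varphi-\tfrac{\lambda}{(2-2^{\ast})a_0}V_0$ lies in $\ker L_2$, and after projecting out translations and dilations one is forced to $\lambda=2^{\ast}\lambda/(2-2^{\ast})$, i.e.\ $2^{\ast}=1$, a contradiction unless $\lambda=0$ and $\varphi$ is in the expected span. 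Nothing in your proposal plays the role of this step. (Your other ingredients --- the uniform-in-$\mu$ HLS bound, the $V_\mu\to V_0$ convergence from Lemma \ref{lem1}, and a compactness-free limiting argument for normalized kernel elements --- all have counterparts in the paper's Section 3, so the proposal could likely be salvaged, but only after replacing $L_0$ by $L_1$ as the anchor and supplying the rank-one analysis above.)
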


\section{Nondegeneracy for the limit problem}
To prove the main results, we need to study a limit problem for the critical Hartree equation \eqref{scaled Equa} as $\mu\to 0$ first.
\begin{Prop}
The nonlocal equation
	\begin{equation}\label{Lim}
		-\Delta v = \Big(\int_{\mathbb{R}^N} v^{{2}^{\ast}}dx\Big) v^{{2}^{\ast}-1},~~x\in\mathbb{R}^{N},
	\end{equation}
has a family of unique positive solution $V_0$ of the form
$$
V_0=S^{\frac{N(2-N)}{4(N+2)}}U_0,
$$
 where $U_0$ is defined in \eqref{U0}. Moreover, the linearized equation of \eqref{Lim} at $V_0$ given by
	\[
		L_1(\varphi):=-\Delta \varphi -{2}^{\ast}\int_{\mathbb{R}^N}V_0^{{2}^{\ast}-1}\varphi dx V_0^{{2}^{\ast}-1} -({2}^{\ast}-1)\int_{\mathbb{R}^N} V_0^{2^\ast}dxV_0^{{2}^{\ast}-2} \varphi=0 \phantom{==} \mbox{ in }\mathbb{R}^N
	\]
only admits solutions in $D^{1,2}(\mathbb{R}^{N})$ of the form
	\[
		\varphi=aD_{t}{V}_0+\mathbf{b}\cdot\nabla {V}_0
	\]
where $a\in\mathbb{R},\mathbf{b}\in\mathbb{R}^{N}$.
\end{Prop}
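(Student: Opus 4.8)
The plan is to split the proposition into the routine determination of $V_0$ and the substantial nondegeneracy statement, reducing the nonlocal linearized operator $L_1$ to the \emph{local} linearized operator of the classical problem \eqref{lcritical}, whose kernel is known from \cite{R}. First I would settle the form and uniqueness of $V_0$: for any positive $v\in D^{1,2}(\R^N)$ solving \eqref{Lim}, the number $\lambda:=\int_{\R^N}v^{2^{\ast}}\,dx$ is a fixed positive constant, so $v$ solves $-\Delta v=\lambda v^{2^{\ast}-1}$, and the substitution $w=\lambda^{1/(2^{\ast}-2)}v$ turns this into $-\Delta w=w^{2^{\ast}-1}$, whose positive solutions are exactly the Aubin--Talenti bubbles $z_{t,\xi}$ by the classification recalled in the introduction. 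Reinserting the constraint $\lambda=\int v^{2^{\ast}}$ fixes $\lambda$ to a single value, so $v=c\,z_{t,\xi}$ with $c$ uniquely determined; the explicit value $c=S^{N(2-N)/(4(N+2))}$ then follows from $\int U_0^{2^{\ast}}=\int|\nabla U_0|^2=S^{N/2}$ and the scale-invariance of $\int z_{t,\xi}^{2^{\ast}}$.

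For the nondegeneracy I would first reduce $L_1$ to the classical operator. Writing $V_0=cU_0$ and $\Lambda:=\int_{\R^N}V_0^{2^{\ast}}\,dx$, the identity $-\Delta V_0=\Lambda V_0^{2^{\ast}-1}$ forces $\Lambda c^{2^{\ast}-2}=1$, hence $(2^{\ast}-1)\Lambda V_0^{2^{\ast}-2}=(2^{\ast}-1)U_0^{2^{\ast}-2}$ and the local part of $L_1$ collapses to
\[
	L_0(\varphi):=-\Delta\varphi-(2^{\ast}-1)U_0^{2^{\ast}-2}\varphi,
\]
whose $D^{1,2}$-kernel is $\mathrm{span}\{D_tU_0,\partial_{x_1}U_0,\dots,\partial_{x_N}U_0\}=\mathrm{span}\{D_tV_0,\nabla V_0\}$ by \cite{R}. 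Given $\varphi\in D^{1,2}(\R^N)$ with $L_1(\varphi)=0$, I set the scalar $\beta:=\int_{\R^N}V_0^{2^{\ast}-1}\varphi\,dx$, so the equation becomes $L_0(\varphi)=2^{\ast}\beta V_0^{2^{\ast}-1}$. The key algebraic observation is $L_0(U_0)=(2-2^{\ast})U_0^{2^{\ast}-1}$, which produces the explicit particular solution $\varphi_p:=\tfrac{2^{\ast}c^{2^{\ast}-1}}{2-2^{\ast}}U_0$ with $L_0(\varphi_p)=2^{\ast}V_0^{2^{\ast}-1}$; thus $\varphi-\beta\varphi_p\in\ker L_0$, i.e. $\varphi=\beta\varphi_p+\varphi_k$ with $\varphi_k\in\mathrm{span}\{D_tV_0,\nabla V_0\}$.

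The argument is then closed by a self-consistency computation. Pairing $\varphi=\beta\varphi_p+\varphi_k$ against $V_0^{2^{\ast}-1}$ and using $V_0^{2^{\ast}-1}\perp\ker L_0$ --- valid because $\int V_0^{2^{\ast}-1}\partial_{x_i}U_0=\tfrac{1}{2^{\ast}}\int\partial_{x_i}(U_0^{2^{\ast}})=0$ and $\int V_0^{2^{\ast}-1}D_tU_0=\tfrac{1}{2^{\ast}}D_t\!\int U_0^{2^{\ast}}=0$ by scale-invariance --- gives $\beta=\beta\int V_0^{2^{\ast}-1}\varphi_p$. A direct evaluation, using $\int U_0^{2^{\ast}}=c^{-2(2^{\ast}-1)}$, yields $\int V_0^{2^{\ast}-1}\varphi_p=\tfrac{2^{\ast}}{2-2^{\ast}}=-\tfrac{N}{2}$, so that $\beta=-\tfrac{N}{2}\beta$ and therefore $\beta=0$; consequently $\varphi=\varphi_k\in\mathrm{span}\{D_tV_0,\nabla V_0\}$, which is the assertion.

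The main obstacle to flag is precisely that $V_0^{2^{\ast}-1}\perp\ker L_0$: the Fredholm alternative then makes $L_0(\varphi)=2^{\ast}\beta V_0^{2^{\ast}-1}$ solvable for \emph{every} $\beta$, so solvability alone yields no contradiction and the nonlocal term cannot be dismissed by a soft argument. One genuinely needs the explicit particular solution $\varphi_p$ and the exact value $\int V_0^{2^{\ast}-1}\varphi_p=-N/2\neq1$ of the pairing to force $\beta=0$; had this number equalled $1$, the nonlocal term would have created a spurious kernel direction. This is the point where the specific algebra of the critical exponent enters decisively, and it is the step I would check most carefully.
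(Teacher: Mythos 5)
Your proposal is correct and follows essentially the same approach as the paper: reduce $L_1$ to the local linearized operator (your $L_0$ is exactly the paper's $L_2$, since $a_0V_0^{2^{\ast}-2}=U_0^{2^{\ast}-2}$), invoke the known nondegeneracy of the Aubin--Talenti bubble for its kernel, exhibit the explicit particular solution proportional to $V_0$, and close with the same algebraic fact $\frac{2^{\ast}}{2-2^{\ast}}=-\frac{N}{2}\neq 1$. The only real difference is bookkeeping: the paper argues by contradiction after normalizing $\varphi$ to be $D^{1,2}$-orthogonal to $\mathrm{span}\{D_t V_0,\nabla V_0\}$, and must then verify $a=0$, $\mathbf{b}=\mathbf{0}$ through Gram-matrix/oddness computations before reaching the contradiction $2^{\ast}=1$, whereas your pairing of the decomposition $\varphi=\beta\varphi_p+\varphi_k$ against $V_0^{2^{\ast}-1}$ annihilates the kernel component outright (by $\int_{\mathbb{R}^N} V_0^{2^{\ast}-1}\partial_{x_i}U_0\,dx=0$ and scale invariance), giving $\beta=0$ directly --- a mild but genuine streamlining of the same proof.
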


\begin{proof}
Let $v_1$ and $v_2$ be two positive radial solutions for \eqref{Lim},
that is
$$
-\Delta v_1 = \int_{\mathbb{R}^N} v_1^{{2}^{\ast}}dx v_1^{{2}^{\ast}-1} \phantom{==} \mbox{ in } \mathbb{R}^N,
$$
and
$$
-\Delta v_2 = \int_{\mathbb{R}^N} v_2^{{2}^{\ast}}dx v_2^{{2}^{\ast}-1} \phantom{==} \mbox{ in } \mathbb{R}^N,
$$
By defining $a_1=\displaystyle\int_{\mathbb{R}^N} v_1^{{2}^{\ast}}dx$ and $a_2=\displaystyle\int_{\mathbb{R}^N} v_2^{{2}^{\ast}}dx$, $v_1, v_2$ are positive radial solutions to
$
-\Delta v_1 = a_1 v_1^{{2}^{\ast}-1},
$
and
$
-\Delta v_2 = a_2 v_2^{{2}^{\ast}-1},
$
in $\mathbb{R}^N$. Notice that $\Big(\frac{a_1}{a_2}\Big)^{\frac{1}{{2}^{\ast}-2}}v_1$ also satisfies the latter equation, by the uniqueness of the positive radial solution for $-\Delta v =a v^{{2}^{\ast}-1} $, we know that
$$
\Big(\frac{a_1}{a_2}\Big)^{\frac{1}{{2}^{\ast}-2}}v_1=v_2.
$$
Consequently, we have $a_1=a_2$ and then $v_1=v_2$ since both $v_1, v_2$ satisfy equation \eqref{Lim}. By the uniqueness of $U_0$, direct computation shows the unique positive solution $V_0$ for \eqref{Lim} is of the form
$$
V_0=S^{\frac{N(2-N)}{4(N+2)}}U_0.
$$

To prove the nondegeneracy, let $a_0=\displaystyle\int_{\mathbb{R}^N} V_0^{{2}^{\ast}}dx$, then $V_0$ satisfies
	\begin{equation}\label{eq:constant lane-emden}
		-\Delta u= a_0 u^{{2}^{\ast}-1}.
	\end{equation}			
The linearized equation of it at $V_0$ is given by
	\[
		L_2(\varphi):=-\Delta \varphi-({2}^{\ast}-1)a_0 V_0^{{2}^{\ast}-2}\varphi=0
	\]
only admits solutions in $D^{1,2}(\mathbb{R}^{N})$ of the form
		\begin{equation}\label{SF}
		\varphi=aD_{t}{V}_0+\mathbf{b}\cdot\nabla {V}_0
\end{equation}
where $a\in\mathbb{R},\mathbf{b}\in\mathbb{R}^{N}$.

If the conclusion is not true, suppose that $L_1$ has a nontrivial solution $\varphi $ in $D^{1,2}(\mathbb{R}^{N})$, which is not of the form in \eqref{SF}. Then we may assume that $\varphi$ is $D^{1,2}(\mathbb{R}^{N})$ orthogonal to $\partial_{x_i}V_0$ for every $i = 1,..., N$ and $D_{t}{V}_0$. By denoting $\lambda := {2}^{\ast}\displaystyle\int_{\mathbb{R}^N}V_0^{{2}^{\ast}-1}\varphi dx$, from
\[
L_1(\varphi)=0,
\]
we know
$L_2(\varphi) = \lambda V_0^{{2}^{\ast}-1}$, and so $\lambda\neq 0$.
Moreover, since
$$
\begin{aligned}
			L_2\left(\frac{\lambda}{(2-{{2}^{\ast}}) a_{0}} V_{0}\right)&=\frac{\lambda}{(2-{{2}^{\ast}) a_{0}}} L_2\left(V_{0}\right)\\
&=\frac{\lambda}{(2-{{2}^{\ast}}) a_{0}}\left(-\Delta V_{0}-({{2}^{\ast}}-1) a_{0} V_{0}^{{{2}^{\ast}}-1}\right)\\
&=\lambda V_{0}^{{{2}^{\ast}}-1},
\end{aligned}
$$
we know $$L_2(\varphi-\frac{\lambda}{(2-{{2}^{\ast}}) a_{0}} V_0)\equiv0.$$
This implies that there are some  $a\in\mathbb{R},\mathbf{b}\in\mathbb{R}^{N}$. such that
\begin{equation}\label{EFU}
		\varphi-\frac{\lambda}{(2-{{2}^{\ast}}) a_{0}} V_{0}=aD_{t}{V}_0+\mathbf{b}\cdot\nabla {V}_0.
	\end{equation}
We prove that  $a=0,\mathbf{b}=(b_1,b_2,...., b_N)=\mathbf{0}$. In fact, taking the inner product of the left part with $\partial_{x_j}V_0$ and integrating, by the fact that $\varphi$ is $D^{1,2}(\mathbb{R}^{N})$ orthogonal to $\partial_{x_i}V_0$ for every $i = 1,..., N$, we know
	\[
\begin{aligned}
		\int_{\mathbb{R}^{N}} \nabla\varphi\nabla\partial_{x_{j}} V_{0} d x-\frac{\lambda}{(2-{{2}^{\ast}}) a_{0}} \int_{\mathbb{R}^{N}} \nabla V_{0} \nabla\partial_{x_{j}} V_{0} d x
&=-\frac{\lambda}{(2-{{2}^{\ast}}) a_{0}} \int_{\mathbb{R}^{N}} \frac{1}{2} \partial_{x_{j}}\left(|\nabla V_{0}|^{2}\right) d x=0.
	\end{aligned}
\]
Similarly, by the fact that $\varphi$ is $D^{1,2}(\mathbb{R}^{N})$ orthogonal to $D_{t}{V}_0$, we also have
\[
\begin{aligned}
		\int_{\mathbb{R}^{N}} \nabla\phi \nabla D_{t}{V}_0 d x-\frac{\lambda}{(2-{{2}^{\ast}}) a_{0}} \int_{\mathbb{R}^{N}} \nabla V_{0} \nabla D_{t}{V}_0d x
&=-\frac{\lambda}{(2-{{2}^{\ast}}) a_{0}} \int_{\mathbb{R}^{N}} \frac{1}{2} D_{t}\left(|\nabla V_{0}|^{2}\right) d x=0,
	\end{aligned}
\]
since $\displaystyle\int_{\mathbb{R}^{N}}|\nabla V_{0}|^{2}d x$ is invariant in $t$.

	On the other hand, taking inner product on the right part of \eqref{EFU} by $\partial_{x_j}V_0$, we get
	\begin{equation}
\begin{aligned}
a\int_{\mathbb{R}^{N}}& \nabla\partial_{x_j}V_0 \nabla D_{t}{V}_0 d x+b_{j} \int_{\mathbb{R}^{N}}|\nabla\partial_{x_{j}} V_{0}|^{2} d x+\underset{i \neq j}{\sum} b_{i} \int_{\mathbb{R}^{N}} \nabla\partial_{x_{i}} V_{0}\nabla\partial_{x_{j}} V_{0} dx \\
&=a\sum_{m=1}^{N} \int_{\mathbb{R}^{N}} \frac{ x_{j}}{r} D_{t}V_{0,m} V_{0,m}^{\prime}(r) dx+b_{j} \int_{\mathbb{R}^{N}}|\nabla\partial_{x_{j}} V_{0}|^{2} dx+\underset{i \neq j}{\sum} \sum_{m=1}^{N}b_{i} \int_{\mathbb{R}^{N}} \frac{x_{i} x_{j}}{r^{2}}|V_{0,m}^{\prime}(r)|^2 dx
\end{aligned}
	\end{equation}
where $V_{0,m}=\partial_{x_{m}} V_{0}$, $1\leq m\leq N$. Since $\frac{ x_{j}}{r} D_{t}V_{0,m}V_{0,m}^{\prime}(r) $ and
	$\frac{x_{i} x_{j}}{r^{2}} |V_{0,m}^{\prime}(r)|^2$ is odd in variables $x_i$ and $x_j$, we know
$$
a\int_{\mathbb{R}^{N}}\nabla\partial_{x_j}V_0 \nabla D_{t}{V}_0 d x+b_{j} \int_{\mathbb{R}^{N}}|\nabla\partial_{x_{j}} V_{0}|^{2} dx+\underset{i \neq j}{\sum} b_{i} \int_{\mathbb{R}^{N}} \nabla\partial_{x_{i}} V_{0}\nabla\partial_{x_{j}} V_{0} dx=b_{j} \int_{\mathbb{R}^{N}}|\nabla\partial_{x_{j}} V_{0}|^{2} dx,
$$
consequently, $b_{j}=0$,  $1\leq j\leq N$, i.e., $\mathbf{b}=\mathbf{0}$. By similar arguments, we know
	\begin{equation}
\begin{aligned}
a&\int_{\mathbb{R}^{N}} |\nabla D_{t}{V}_0|^2dx +\sum_{i=1}^{N}b_{i} \int_{\mathbb{R}^{N}} \nabla D_{t}{V}_0\nabla\partial_{x_{i}}V_{0}dx\\
&=a\int_{\mathbb{R}^{N}} |\nabla D_{t}{V}_0|^2dx +\sum_{m=1}^{N}b_i \int_{\mathbb{R}^{N}}\frac{ x_{i}}{r} D_{t}V_{0,m} V_{0,m}^{\prime}(r)dx\\
&=a\int_{\mathbb{R}^{N}} |\nabla D_{t}{V}_0|^2dx,
\end{aligned}
	\end{equation}
which implies $a=0$.

Combining the above arguments, we conclude that
$$
\varphi-\frac{\lambda}{(2-{{2}^{\ast}}) a_{0}} V_{0}=0,
$$
i.e.
$$
\varphi=\frac{\lambda}{(2-{{2}^{\ast}}) a_{0}} V_{0}.
$$
Now note that
	\[
		\lambda={2}^{\ast} \int_{\mathbb{R}^{N}} V_{0}^{{2}^{\ast}-1} \phi d x={2}^{\ast} \frac{\lambda}{(2-{2}^{\ast}) a_{0}} \int_{\mathbb{R}^{N}} V_{0}^{{2}^{\ast}} d x={2}^{\ast} \frac{\lambda}{2-{2}^{\ast}}.
	\]
This implies that ${2}^{\ast}=1$, which obviously is a contradiction.
\end{proof}

\section{Proof of the main results}
\begin{Prop}\label{Convergence}
	Let $\{V_{\mu}\}$ be the unique family of positive solutions to \eqref{scaled Equa}
 and $V_0\in D^{1,2}(\mathbb{R}^N)$ be a unique positive radial state of \eqref{Lim}. Then one has
	\[
		\lim\limits_{\mu\rightarrow 0} \| V_{\mu}-V_{0} \|_{D^{1,2}(\mathbb{R}^N)}=0.
	\]
\end{Prop}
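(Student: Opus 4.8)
The plan is to establish the $D^{1,2}$-convergence by combining a uniform energy bound, weak convergence, and an identification of the weak limit via the limit equation \eqref{Lim}. First I would recall that each $V_\mu$ is the unique positive radial solution of \eqref{scaled Equa}, and by Lemma \ref{lem1} it is an explicit rescaling of the Aubin--Talenti bubble $U_0$; consequently $V_\mu$ minimizes $S_{HL}$ and has a controlled Dirichlet energy. The key preliminary step is therefore to verify that the family $\{V_\mu\}$ is bounded in $D^{1,2}(\mathbb{R}^N)$ uniformly for $\mu$ near $0$. Since $V_\mu = c(N,\mu) U_0$ with $c(N,\mu)$ given explicitly in Lemma \ref{lem1}, I would simply check that the scaling constant $S^{\frac{(N-\mu)(2-N)}{4(N-\mu+2)}} C(N,\mu)^{\frac{2-N}{2(N-\mu+2)}}$ converges to the constant $S^{\frac{N(2-N)}{4(N+2)}}$ appearing in $V_0$ as $\mu\to 0$. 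This reduces everything to the elementary continuity, in $\mu$, of the exponents and of the sharp HLS constant $C(N,\mu)=\pi^{\mu/2}\frac{\Gamma((N-\mu)/2)}{\Gamma(N-\mu/2)}\big\{\Gamma(N)/\Gamma(N/2)\big\}^{(N-\mu)/N}$ from Proposition \ref{HLS}, which is continuous near $\mu=0$ because $\Gamma$ is continuous and nonzero there.

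Given the explicit formulas, the cleanest route is direct: write
\[
V_\mu(x)=c(N,\mu)\,[N(N-2)]^{\frac{N-2}{4}}\Big(\frac{t}{t^2+|x-\xi|^2}\Big)^{\frac{N-2}{2}},
\qquad
V_0(x)=c(N,0)\,[N(N-2)]^{\frac{N-2}{4}}\Big(\frac{t}{t^2+|x-\xi|^2}\Big)^{\frac{N-2}{2}},
\]
with a fixed choice of $(t,\xi)$ (say $t=1$, $\xi=0$) by translation and dilation invariance. Then
\[
\| V_\mu - V_0 \|_{D^{1,2}}^2 = \big(c(N,\mu)-c(N,0)\big)^2 \| U_0 \|_{D^{1,2}}^2,
\]
and the right-hand side tends to $0$ precisely because $c(N,\mu)\to c(N,0)$ as $\mu\to 0$, which is exactly the continuity statement established in the first step. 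This makes the convergence immediate once the constant is shown to be continuous in $\mu$.

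I expect the main (though modest) obstacle to be the bookkeeping in the limit $c(N,\mu)\to c(N,0)$: one must confirm that the exponent $\frac{2-N}{2(N-\mu+2)}$ and the base $C(N,\mu)$ both behave continuously, and that no hidden singularity appears as $\mu\to 0$. The potential worry is the factor $\Gamma((N-\mu)/2)$ in $C(N,\mu)$, but for $\mu$ near $0$ the argument $(N-\mu)/2$ stays bounded away from the poles of $\Gamma$, so $C(N,\mu)$ is smooth and positive, and the limiting value reproduces the constant defining $V_0$ in Proposition 2.1. Should one prefer a PDE argument independent of the closed form, the alternative is to test \eqref{scaled Equa} against $V_\mu$ to get the uniform bound, extract a weak limit $\bar V$ in $D^{1,2}$, pass to the limit in the nonlocal term using $I_\mu \ast \,\cdot\, \to \int_{\mathbb{R}^N}\,\cdot\,$ as $\mu\to 0$ together with the Hardy--Littlewood--Sobolev inequality (Proposition \ref{HLS}) to control the convolution, identify $\bar V$ as the unique solution $V_0$ of \eqref{Lim}, and finally upgrade weak to strong convergence by showing $\|V_\mu\|_{D^{1,2}}\to\|V_0\|_{D^{1,2}}$; but given Lemma \ref{lem1} the explicit computation above is the shortest path.
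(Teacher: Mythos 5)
Your proposal is correct and follows essentially the same route as the paper: both reduce the claim to the explicit formulas $V_\mu = S^{\frac{(N-\mu)(2-N)}{4(N-\mu+2)}}C(N,\mu)^{\frac{2-N}{2(N-\mu+2)}}U_0$ and $V_0 = S^{\frac{N(2-N)}{4(N+2)}}U_0$ from Lemma \ref{lem1}, and then to the continuity of the scalar coefficient as $\mu\to 0$, using that $C(N,\mu)\to 1$. The uniform-bound/weak-limit machinery you mention as an alternative is not needed, and the paper does not use it either.
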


\begin{proof}
Notice that
	\[
		V_{\mu}=S^{\frac{(N-\mu)(2-N)}{4(N-\mu+2)}}C(N,\mu)^{\frac{2-N}{2(N-\mu+2)}}U_0,\ \ V_0=S^{\frac{N(2-N)}{4(N+2)}}U_0,
	\]
since $$C(N,\mu)=\pi^{\frac{\mu}{2}}\frac{\Gamma(\frac{{N-\mu}}{2})}{\Gamma(N-\frac{\mu}{2})}\left\{\frac{\Gamma(N)}{\Gamma(\frac{N}{2})}\right\}^{\frac{{N-\mu}}{N}}\to 1,\ \hbox{as}\ \mu\to 0,$$
we know
\[
		\lim\limits_{\mu\rightarrow 0} \| V_{\mu}-V_{0} \|_{D^{1,2}(\mathbb{R}^N)}=0.
	\]
\end{proof}

We introduce the following equivalent form of the HLS inequality with Riesz potential.
\begin{Prop}\label{Prop}
Let $1\leq r<s<\infty$ and $0<\mu<N$ satisfy
$$
\frac{1}{r}-\frac{1}{s}=\frac{N-\mu}{N}.
$$
Then for $\mu$ sufficient close to $0$, there exists constant $C>0$ such that for any $f\in L^{r}(\mathbb{R}^{N})$, there holds
\begin{equation}\label{RFV}
\|\frac{1}{|\cdot|^{\mu}}\ast f\|_{L^{s}(\mathbb{R}^{N})}\leq C\|f\|_{L^{r}(\mathbb{R}^{N})}.
\end{equation}
\end{Prop}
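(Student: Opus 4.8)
The plan is to deduce \eqref{RFV} from the bilinear Hardy--Littlewood--Sobolev inequality of Proposition \ref{HLS} by a standard duality argument, so that no genuinely new estimate is needed beyond identifying the exponents. Write $K(x)=|x|^{-\mu}$, so that the quantity to be controlled is $\|K\ast f\|_{L^{s}}$. Since $1<s<\infty$, we have $L^{s}=(L^{s'})^{\ast}$ with $\frac1s+\frac{1}{s'}=1$, and therefore
\[
\|K\ast f\|_{L^{s}(\mathbb{R}^{N})}=\sup\Big\{\Big|\int_{\mathbb{R}^{N}}(K\ast f)(x)\,h(x)\,dx\Big|:\ h\in L^{s'}(\mathbb{R}^{N}),\ \|h\|_{L^{s'}}\leq 1\Big\}.
\]
First I would reduce to nonnegative $f,h$ (replacing them by $|f|,|h|$ only increases the pairing), which legitimizes Tonelli's theorem when rewriting the pairing as a double integral and, a posteriori, Fubini for the signed case once finiteness is known.

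Next I would unfold the convolution and interchange the order of integration to obtain
\[
\int_{\mathbb{R}^{N}}(K\ast f)(x)\,h(x)\,dx=\int_{\mathbb{R}^{N}}\int_{\mathbb{R}^{N}}\frac{f(y)\,h(x)}{|x-y|^{\mu}}\,dx\,dy .
\]
Now apply Proposition \ref{HLS} to the pair $(f,h)$ with the two HLS exponents taken to be $r$ (for $f$) and $s'$ (for $h$). The admissibility condition $\frac1r+\frac{1}{s'}+\frac{\mu}{N}=2$ required there becomes, after substituting $\frac{1}{s'}=1-\frac1s$, exactly the hypothesis $\frac1r-\frac1s=\frac{N-\mu}{N}$; this algebraic match is the only place the scaling relation is used. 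Hence the double integral is bounded by $C(N,\mu,r,s')\,\|f\|_{L^{r}}\,\|h\|_{L^{s'}}\leq C(N,\mu,r,s')\,\|f\|_{L^{r}}$, and taking the supremum over $\|h\|_{L^{s'}}\leq 1$ yields \eqref{RFV} with $C=C(N,\mu,r,s')$.

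The main obstacle is the endpoint. Proposition \ref{HLS} requires \emph{both} exponents to be strictly larger than $1$, whereas the statement as written permits $r=1$ (which forces $s=N/\mu$). At $r=1$ the matching HLS exponent is also $1$ and is inadmissible; moreover the strong bound \eqref{RFV} genuinely fails there, since convolution with $|x|^{-\mu}$ maps $L^{1}$ only into the weak space $L^{N/\mu,\infty}$. Thus the duality argument cleanly establishes the claim for every $1<r<s<\infty$ obeying the scaling relation, and for all $0<\mu<N$ rather than merely $\mu$ small; the regime $\mu\to0$ enters only through the observation that the admissible exponent range $1<r<\frac{N}{N-\mu}$ shrinks toward the endpoint $r=1$ (with $s$ pushed above $N/\mu$) as $\mu\to0$. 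If the borderline $r=1$ were actually required, the strong estimate is unavailable and one could assert at most the weak-type bound $\|K\ast f\|_{L^{N/\mu,\infty}}\le C\|f\|_{L^{1}}$; accordingly I would read the proposition, and use it in the sequel, with $r>1$.
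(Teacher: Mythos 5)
Your duality argument is correct for $1<r<s<\infty$ and is, in substance, the same route the paper takes: the paper's one-line proof simply cites the ``equivalent (convolution) form'' of the HLS inequality from Stein, and what you have written out is exactly the standard derivation of that form from the bilinear inequality of Proposition \ref{HLS}, with the right exponent bookkeeping (the condition $\frac1r+\frac1{s'}+\frac{\mu}{N}=2$ is indeed the hypothesis $\frac1r-\frac1s=\frac{N-\mu}{N}$) and the right reduction to nonnegative $f,h$ to justify Tonelli. Your endpoint remark is also a genuine, if small, correction to the statement: at $r=1$ (so $s=N/\mu$) the strong bound fails, since for any nonzero $0\le f\in L^{1}$ with compact support one has $\bigl(|x|^{-\mu}\ast f\bigr)(x)\ge c\,\|f\|_{L^{1}}|x|^{-\mu}$ for large $|x|$, and $|x|^{-\mu}\notin L^{N/\mu}$ at infinity; so ``$1\le r$'' should read ``$1<r$'', and the theorem in Stein that the paper cites carries the same restriction.

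Where you part company with the paper is in your reading of ``$\mu$ sufficiently close to $0$''. You declare that clause essentially vacuous and are content with a constant $C=C(N,\mu,r,s')$ depending on $\mu$. But the paper's proof invokes $C(N,\mu)\to 1$ as $\mu\to 0$ precisely because what the proposition must deliver, for its later use, is a constant that stays bounded as $\mu\to 0$: in the proof of Theorem \ref{thm: nondegeneracy}, estimate \eqref{RFV} is applied along a sequence $\mu_n\to 0$ with exponents tied to $\mu_n$ (there $r=\frac{2N}{2N-\mu_n}\to 1$ and $s=\frac{2N}{\mu_n}\to\infty$), and a single constant is used for all $n$. Since these exponents degenerate toward the inadmissible corner $r=1$, $s=\infty$, boundedness of your constant is not automatic from the duality argument alone. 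It is, however, available in exactly the case needed: in that application $s'=r=\frac{2N}{2N-\mu_n}$, which is the diagonal case of Proposition \ref{HLS}, where the sharp constant is Lieb's explicit $C(N,\mu)$ and $C(N,\mu)\to 1$ as $\mu\to 0$. Adding that one observation would make your proof deliver everything the paper actually uses; as written, it proves the literal statement (for $r>1$) but not the uniformity in $\mu$ that the ``$\mu$ close to $0$'' clause is there to encode.
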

\begin{proof}
This is due to the equivalent form of Hardy-littlewood-Sobolev inequality and the fact that the best constant $C(N, \mu)\to 1$ as $\mu\rightarrow 0$, see \cite{St}.
\end{proof}


 The corresponding Euler-Lagrange functional of \eqref{scaled Equa} is
$$
J_{\mu}(v)=\frac{1}{2}\int_{\mathbb{R}^{N}}|\nabla v|^{2}dx-\frac{1}{2\cdot2^{\ast}_{\mu}}\int_{\mathbb{R}^{N}}(\frac{1}{|x|^{\mu}}\ast v^{2^{\ast}_{\mu}})v^{{2}^{\ast}_{\mu}}dx.
$$
and the derivative of $J_{\mu}(v)$ is
$$
<J^{\prime}_{\mu}(v),\phi>=\int_{\mathbb{R}^{N}}\nabla v\nabla \phi dx-\int_{\mathbb{R}^{N}}(\frac{1}{|x|^{\mu}}\ast v^{2^{\ast}_{\mu}})v^{{2}^{\ast}_{\mu}-1}\phi dx,~~\phi\in D^{1,2}(\mathbb{R}^{N}).
$$
The functional $J_{\mu}(v)$ possesses the family of critical points, depending on $(N+1)$-parameters $\xi\in\mathbb{R}^{N}$ and $t\in\mathbb{R}^{+}$,
$$
z_{t,\xi,\mu}=S^{\frac{(N-\mu)(2-N)}{4(N-\mu+2)}}C^{\ast}(N,\mu)^{\frac{2-N}{2(N-\mu+2)}}[\frac{\sqrt{N(N-2)}t}{(t^{2}+|x-\xi|^{2})}]^{\frac{N-2}{2}}.
$$
Then we define $Z_{\mu}$ is the $(N+1)$-dimensional critical manifold of $J^{\prime}_{\mu}(v)$ which satisfies
$$
Z_{\mu}=\{z_{t,\xi,\mu}|t>0,\xi\in\mathbb{R}^{N}\},
$$
and
$T_{z}Z_{\mu}$ is the tangent space to $Z_{\mu}$. We have
$$
<J^{\prime\prime}_{\mu}(z)[w],\phi>=0,~~\forall w\in T_{z}Z_{\mu},~~\forall\phi\in D^{1,2}(\mathbb{R}^{N}).
$$

If $\mu$ is close to $0$ or $N$, the nondegeneracy of the ground states of the subcritical Choquard equation was studied in \cite{SJ}. Inspired by \cite{SJ}, we are going to prove that $Z_{\mu}$ satisfies the nondegeneracy condition when $\mu$ is close to $0$.

{\bf Proof of Theorem \ref{thm: nondegeneracy}.}
Fix $t, \xi$, since any $V_\mu\in Z_{\mu}$ satisfies \eqref{scaled Equa}, we can conclude that any $\partial_{t}V_\mu=\frac{\partial V_\mu}{\partial t}$, $\partial_{i}V_\mu=\frac{\partial V_\mu}{\partial x_{i}}\in T_{z}{Z_{\mu}}(i=1,2,\cdots, N)$ satisfy the following equation:
\begin{equation}\label{A}
A_{\mu}(\psi)=-\Delta\psi-2^{\ast}_{\mu}(\frac{1}{|x|^{\mu}}\ast(V_\mu^{2^{\ast}_{\mu}-1}\psi))V_\mu^{{2}^{\ast}_{\mu}-1}-(2^{\ast}_{\mu}-1)(\frac{1}{|x|^{\mu}}\ast V_\mu^{2^{\ast}_{\mu}})V_\mu^{{2}^{\ast}_{\mu}-2}\psi=0,~~\psi\in D^{1,2}(\mathbb{R}^{N}).
\end{equation}
It is obvious that $T_{z}{Z_{\mu}}\subseteq\mathbf{Ker}{[J_{\mu}^{\prime\prime}(z)]}$, next we show that $\mathbf{Ker}{[J_{\mu}^{\prime\prime}(z)]}\subseteq T_{z}{Z_{\mu}}$. Noting that $V_\mu=cU_0$ and recalling the finite dimensional vector space
$$
T_{z}{Z_{\mu}}:=\mathbf{span}\{\partial_{1}V_\mu,\partial_{2}V_\mu,\cdots,\partial_{N}V_\mu,\partial_{t}V_\mu\}=\mathbf{span}\{\partial_{1}U_0,\partial_{2}U_0,\cdots,\partial_{N}U_0,\partial_{t}U_0\}.
$$
On the contrary, we suppose that there exists a sequence $\{\mu_{n}\}$ with $\mu_{n}\rightarrow 0$ as $n\rightarrow\infty$ and for each $\mu_{n}$ we have nontrivial solution $\psi_{n}$ of \eqref{A} in the complement of $T_{z}{Z_{\mu}}$ in $L^{2^{\ast}}(\mathbb{R}^{N})$.

We define the operator
$$
L[\psi]=2^{\ast}_{\mu}(\frac{1}{|x|^{\mu}}\ast(V_\mu^{2^{\ast}_{\mu}-1}\psi))V_\mu^{{2}^{\ast}_{\mu}-1}+(2^{\ast}_{\mu}-1)(\frac{1}{|x|^{\mu}}\ast V_\mu^{2^{\ast}_{\mu}})V_\mu^{{2}^{\ast}_{\mu}-2}\psi,
$$
then for any $\varphi\in D^{1,2}(\mathbb{R}^{N})$, H$\ddot{o}$lder's inequality and Proposition \ref{Prop} implies that
\begin{equation}\nonumber
\begin{aligned}
|<L[\psi],\varphi>|
&\leq2^{\ast}_{\mu}|\int_{\mathbb{R}^{N}}(\frac{1}{|x|^{\mu}}\ast(V_\mu^{2^{\ast}_{\mu}-1}\psi))V_\mu^{{2}^{\ast}_{\mu}-1}\varphi dx|+(2^{\ast}_{\mu}-1)|\int_{\mathbb{R}^{N}}(\frac{1}{|x|^{\mu}}\ast V_\mu^{2^{\ast}_{\mu}})V_\mu^{{2}^{\ast}_{\mu}-2}\psi\varphi dx|\\
&\leq C\left(||V_\mu||^{\frac{N+2-\mu}{N}}_{L^{{2}^{\ast}}}+||V_\mu||^{\frac{2N-\mu}{N-2}}_{L^{{2}^{\ast}}}||V_\mu||^{\frac{4-\mu}{(2N-\mu){2}^{\ast}}}_{L^{\frac{4-\mu}{N-2}}}\right)||\psi||_{L^{2^{\ast}}(\mathbb{R}^{N})}||\varphi||_{D^{1,2}}\\
\end{aligned}
\end{equation}
where we used the integrability of $V_\mu\in L^{p}(\mathbb{R}^{N}), (\frac{N}{N-2}<p\leq\infty)$ and $\psi\in L^{2^{\ast}}(\mathbb{R}^{N})$. Therefore we find that the functional $L[\psi]\in(D^{1,2}(\mathbb{R}^{N}))^{\ast}$ where $(D^{1,2}(\mathbb{R}^{N}))^{\ast}$ denote the dual space of $D^{1,2}(\mathbb{R}^{N})$. Since $-\Delta\psi\in(D^{1,2}(\mathbb{R}^{N}))^{\ast}$, then we achieve that $\psi\in D^{1,2}(\mathbb{R}^{N})$.

Now we may assume that $\psi_{n}$ is a sequence of unit solutions for the linearized equation at $V_{n}:=V_{\mu_n}$, hence there exists $\psi_{0}\in D^{1,2}(\mathbb{R}^{N})$, such that $\psi_{n}\rightharpoonup\psi_{0}$ in $D^{1,2}(\mathbb{R}^{N})$ as $n\rightarrow\infty$. Consequently for any $\varphi\in D^{1,2}(\mathbb{R}^{N})$,
\begin{equation}\label{B}
\begin{aligned}
\int_{\mathbb{R}^{N}}\nabla\psi_n\nabla\varphi dx=&2^{\ast}_{\mu_n}\int_{\mathbb{R}^{N}}\Big(\frac{1}{|x|^{\mu_n}}\ast(V^{2^{\ast}_{\mu_{n}}-1}_{n}\psi_n)\Big)V^{2^{\ast}_{\mu_{n}}-1}_{n}\varphi dx\\
&+(2^{\ast}_{\mu_n}-1)\int_{\mathbb{R}^{N}}\Big(\frac{1}{|x|^{\mu_n}}\ast(V^{2^{\ast}_{\mu_{n}}}_n)\Big)V^{2^{\ast}_{\mu_{n}}-2}_{n}\psi_n\varphi dx.
\end{aligned}
\end{equation}

We claim that
\begin{equation}\label{Claim1}
\int_{\mathbb{R}^{N}}\Big(\frac{1}{|x|^{\mu_n}}\ast (V^{2^{\ast}_{\mu_{n}}-1}_{n}\psi_{n})\Big)V^{2^{\ast}_{\mu_{n}}-1}_{n}\varphi dx\rightarrow\Big(\int_{\mathbb{R}^{N}}V^{2^{\ast}-1}_{0}\psi_{0}dx\Big)\Big(\int_{\mathbb{R}^{N}}V^{2^{\ast}-1}_{0}\varphi dx\Big), \ \ \hbox{as} \ \ n\to \infty
\end{equation}
and
\begin{equation}\label{Claim2}
\int_{\mathbb{R}^{N}}\Big(\frac{1}{|x|^{\mu_n}}\ast V^{2^{\ast}_{\mu_{n}}}_n\Big)V^{2^{\ast}_{\mu_{n}}-2}_{n}\psi_n\varphi dx\rightarrow\Big(\int_{\mathbb{R}^{N}}V^{2^{\ast}}_{0}dx\Big)\Big(\int_{\mathbb{R}^{N}}V^{2^{\ast}-2}_{0}\psi_{0}\varphi dx\Big), \ \ \hbox{as} \ \ n\to \infty.
\end{equation}
In fact, to prove \eqref{Claim1}, we observe that
\begin{equation}\label{PClaim1}
\begin{aligned}
\int_{\mathbb{R}^{N}}&\Big(\frac{1}{|x|^{\mu_n}}\ast(V^{2^{\ast}_{\mu_{n}}-1}_{n}\psi_{n})\Big)V^{2^{\ast}_{\mu_{n}}-1}_{n}\varphi dx\\
&=\int_{\mathbb{R}^{N}}\Big(\frac{1}{|x|^{\mu_n}}\ast(V^{2^{\ast}_{\mu_{n}}-1}_{n}\psi_{n}-V^{2^{\ast}-1}_0\psi_{0})\Big)V^{2^{\ast}_{\mu_{n}}-1}_{n}\varphi dx+\int_{\mathbb{R}^{N}}\Big(\frac{1}{|x|^{\mu_n}}\ast(V^{2^{\ast}-1}_{0}\psi_{0})\Big)V^{2^{\ast}-1}_{0}\varphi dx\\
&\hspace{4mm}+\int_{\mathbb{R}^{N}}\Big(\frac{1}{|x|^{\mu_n}}\ast(V^{2^{\ast}-1}_{0}\psi_{0})\Big)\Big(V^{2^{\ast}_{\mu_{n}}-1}_{n}-V^{2^{\ast}-1}_{0}\Big)\varphi dx.
\end{aligned}
\end{equation}
Consider the first term in \eqref{PClaim1}, by the HLS inequality, we know
\begin{equation}\label{FT1}
\begin{aligned}
\Big|\int_{\mathbb{R}^{N}}&\Big(\frac{1}{|x|^{\mu_n}}\ast(V^{2^{\ast}_{\mu_{n}}-1}_{n}\psi_{n}-V^{2^{\ast}-1}_0\psi_{0})\Big)V^{2^{\ast}_{\mu_{n}}-1}_{n}\varphi dx\Big|\\
&\leq \|V^{2^{\ast}_{\mu_{n}}-1}_{n}\psi_{n}-V^{2^{\ast}-1}_0\psi_{0}\|_{\frac{2N}{2N-\mu}}\|V^{2^{\ast}_{\mu_{n}}-1}_{n}\varphi \|_{\frac{2N}{2N-\mu}}\\
&\leq  \Big(\|(V^{2^{\ast}_{\mu_{n}}-1}_{n}-V^{2^{\ast}-1}_0)\psi_{n}\|_{\frac{2N}{2N-\mu}}+\|V^{2^{\ast}-1}_{0}(\psi_{n}-\psi_{0})\|_{\frac{2N}{2N-\mu}}\Big)\|V^{2^{\ast}_{\mu_{n}}-1}_{n}\varphi \|_{\frac{2N}{2N-\mu}}.
\end{aligned}
\end{equation}
Then, by the expression of $V_n, V_0$, we know that $(V^{2^{\ast}_{\mu_{n}}-1}_{n}-V^{2^{\ast}-1}_0)\in L^{\frac{2N}{N+2-\mu}}(
\mathbb{R}^{N})$, hence
$$
\|(V^{2^{\ast}_{\mu_{n}}-1}_{n}-V^{2^{\ast}-1}_0)\psi_{n}\|_{\frac{2N}{2N-\mu}}\leq \|V^{2^{\ast}_{\mu_{n}}-1}_{n}-V^{2^{\ast}-1}_0\|_{\frac{2N}{N+2-\mu}}\|\psi_{n}\|_{2^{\ast}},
$$
the Dominated Convergence Theorem implies that
$$
\|V^{2^{\ast}_{\mu_{n}}-1}_{n}-V^{2^{\ast}-1}_0\|_{\frac{2N}{N+2-\mu}}\to 0,\ \ \hbox{as} \ \ n\to \infty.
$$
At the mean time, the fact that $\psi_{n}$ converges weakly to $\psi_{0}$ in $D^{1,2}(\mathbb{R}^{N})$ also implies that
$$
\|V^{2^{\ast}-1}_{0}(\psi_{n}-\psi_{0})\|_{\frac{2N}{2N-\mu}}\to 0,\ \ \hbox{as} \ \ n\to \infty.
$$
Similar arguments for the second term in \eqref{PClaim1}, we know
$$
\int_{\mathbb{R}^{N}}\Big(\frac{1}{|x|^{\mu_n}}\ast(V^{2^{\ast}-1}_{0}\psi_{0})\Big)\Big(V^{2^{\ast}_{\mu_{n}}-1}_{n}-V^{2^{\ast}-1}_{0}\Big)\varphi dx\to 0,\ \ \hbox{as} \ \ n\to \infty.
$$
In order to prove
$$
\begin{aligned}
\int_{\mathbb{R}^{N}}\Big(\frac{1}{|x|^{\mu_n}}\ast(V^{2^{\ast}-1}_{0}\psi_{0})\Big)V^{2^{\ast}-1}_{0}\varphi dx\to \Big(\int_{\mathbb{R}^{N}}V^{2^{\ast}-1}_{0}\psi_{0}dx\Big)\Big(\int_{\mathbb{R}^{N}}V^{2^{\ast}-1}_{0}\varphi dx\Big), \ \ \hbox{as} \ \ n\to \infty,
\end{aligned}
$$
 we need to establish a version of convergence property under $L^{\infty}$ norm and a
$L^{\infty}$-estimate for the convolution part for the case $\mu$ is close to $0$.
The proof of $L^{\infty}$-estimate is similar to Lemma 2.6 in \cite{DY} and Proposition 2.5 in \cite{SJ} where the subcritical case was considered.
By H$\ddot{o}$lder's inequality and the boundedness of $V_0$, we know
$$
\begin{aligned}
\Big|\frac{1}{|\cdot|^{\mu_n}}\ast(V^{2^{\ast}-1}_{0}\psi_{0})\Big|&\leq\int_{\mathbb{R}^{N}}\frac{1}{|x-y|^{\mu_n}}|V^{2^{\ast}-1}_{0}\psi_{0}|(y)dy\\
&\leq  \int_{B_1(x)}\frac{1}{|x-y|^{\mu_n}}|V^{2^{\ast}-1}_{0}\psi_{0}(y)|dy+ \int_{B^{c}_1(x)}\frac{1}{|x-y|^{\mu_n}}|V^{2^{\ast}-1}_{0}\psi_{0}(y)|dy\\
&\leq C(\int_{B_1(0)}\frac{1}{|y|^{\mu_n\frac{2^{\ast}}{2^{\ast}-1}}}dy+1) \|\psi_{0}\|_{2^{\ast}}.
\end{aligned}
$$
Thus $\{\frac{1}{|\cdot|^{\mu_n}}\ast(V^{2^{\ast}-1}_{0}\psi_{0})\}$ is uniformly bounded for $\mu$ sufficient close to $0$. Moreover, following the proof of Proposition 2.5 in \cite{SJ}, we also have the following property
$$
\lim_{n\to \infty}\Big|\frac{1}{|\cdot|^{\mu_n}}\ast(V^{2^{\ast}-1}_{0}\psi_{0})-\int_{\mathbb{R}^{N}}V^{2^{\ast}-1}_{0}\psi_{0} dx\Big|_{L^{\infty}(K)}=0
$$
for any compact set $K\subset \R^N$.

Now we may take the limit as $n\rightarrow\infty$ in \eqref{B} and obtain that
\begin{equation}\label{B2}
\begin{aligned}
\int_{\mathbb{R}^{N}}\nabla\psi_0\nabla\varphi dx=2^{\ast}\Big(\int_{\mathbb{R}^{N}}V^{2^{\ast}-1}_{0}\psi_{0}dx\Big)\Big(\int_{\mathbb{R}^{N}}V^{2^{\ast}-1}_{0}\varphi dx\Big)+(2^{\ast}-1)\Big(\int_{\mathbb{R}^{N}}V^{2^{\ast}}_{0}dx\Big)\Big(\int_{\mathbb{R}^{N}}V^{2^{\ast}-2}_{0}\psi_{0}\varphi dx\Big),
\end{aligned}
\end{equation}
 for any $\varphi\in D^{1,2}(\mathbb{R}^{N})$. Hence $\psi_{0}$ satisfies
\begin{equation}\label{C}
-\Delta \psi_{0} ={2}^{\ast}\int_{\mathbb{R}^N}V_0^{{2}^{\ast}-1}\psi_{0} dx V_0^{{2}^{\ast}-1} +({2}^{\ast}-1)\int_{\mathbb{R}^N} V_0^{2^\ast}dxV_0^{{2}^{\ast}-2}\psi_{0}.
\end{equation}
By the nondegeneracy of $V_{0}$, we know
\begin{equation}\label{NR}
\psi_{0}\in\mathbf{span}\Big\{\partial_{1}V_{0},\partial_{2}V_{0},\cdots\partial_{N}V_{0},\partial_{t}V_{0}\Big\}.
\end{equation}

We prove that $\psi_{0}\neq0$. We take $\varphi_{n}=\psi_{n}$ in equation \eqref{B} to get
\begin{equation}\label{NTrival}
\begin{aligned}
\int_{\mathbb{R}^{N}}|\nabla\psi_n|^2 dx=&2^{\ast}_{\mu_n}\int_{\mathbb{R}^{N}}\Big(\frac{1}{|x|^{\mu_n}}\ast(V^{2^{\ast}_{\mu_{n}}-1}_{n}\psi_n)\Big)V^{2^{\ast}_{\mu_{n}}-1}_{n}\psi_n dx\\
&+(2^{\ast}_{\mu_n}-1)\int_{\mathbb{R}^{N}}\Big(\frac{1}{|x|^{\mu_n}}\ast V^{2^{\ast}_{\mu_{n}}}_n\Big)V^{2^{\ast}_{\mu_{n}}-2}_{n}\psi^2_n dx.
\end{aligned}
\end{equation}
However, on one hand,
$$
\int_{\mathbb{R}^{N}}|\nabla\psi_n|^2 dx=1.
$$
On the other hand, keep Proposition \ref{Convergence} in mind, we can repeat the arguments in Claims \eqref{Claim1} and \eqref{Claim2} to show
$$
\begin{aligned}
2^{\ast}_{\mu_n}&\int_{\mathbb{R}^{N}}\Big(\frac{1}{|x|^{\mu_n}}\ast(V^{2^{\ast}_{\mu_{n}}-1}_{n}\psi_n)\Big)V^{2^{\ast}_{\mu_{n}}-1}_{n}\psi_n dx+(2^{\ast}_{\mu_n}-1)\int_{\mathbb{R}^{N}}\Big(\frac{1}{|x|^{\mu_n}}\ast V^{2^{\ast}_{\mu_{n}}}_n\Big)V^{2^{\ast}_{\mu_{n}}-2}_{n}\psi^2_n dx\\
&\hspace{4mm}\longrightarrow2^{\ast}\Big(\int_{\mathbb{R}^{N}}V^{2^{\ast}-1}_{0}\psi_{0}dx\Big)^2+(2^{\ast}-1)\Big(\int_{\mathbb{R}^{N}}V^{2^{\ast}}_{0}dx\Big)\Big(\int_{\mathbb{R}^{N}}V^{2^{\ast}-2}_{0}|\psi_{0}|^2 dx\Big),
\end{aligned}
$$
therefore $\psi_{0}\neq0$.

Since we know
$$
\psi_{n}\in T_{z}{Z_{\mu}}^\bot=\mathbf{span}\Big\{\partial_{1}V_0,\partial_{2}V_0,\cdots,\partial_{N}V_0,\partial_{t}V_0\Big\}^\bot,
$$
then for every
$$
\eta_0=aD_{t}V_{0}+\mathbf{b}\cdot\nabla V_{0}\in \mathbf{span}\Big\{\partial_{1}V_0,\partial_{2}V_0,\cdots,\partial_{N}V_0,\partial_{t}V_0\Big\},
$$
where $a\in\mathbb{R},\mathbf{b}=(b_{1},b_{2},\cdots,b_{N})\in\mathbb{R}^{N}$, we have
$$
<\psi_{n},\eta_{0}>=0
$$
Where we denote $<\cdot,\cdot>$ as the inner product in $D^{1,2}(\mathbb{R}^{N})$. However, as $n\rightarrow\infty$, we know
$$
<\psi_{0},\eta_{0}>=0.
$$
This contradicts to  \eqref{NR} that $\psi_{0}\in\mathbf{span}\Big\{\partial_{1}V_{0},\partial_{2}V_{0},\cdots,\partial_{N}V_{0},\partial_{t}V_{0}\Big\}$, since we proved that $\psi_{0}\neq0$. Hence any solution satisfies \eqref{A} must belong to $T_{z}Z$ in the space $L^{2^{\ast}}$, that is $T_{z}{Z_{\mu}}=\mathbf{Ker}{[J_\mu^{\prime\prime}(z)]}$, we finish the proof.

$\hfill{} \Box$



\begin{thebibliography}{99}


\bibitem{AGSY}
 C.O. Alves, F. Gao, M. Squassina \& M. Yang.
\emph{Singularly perturbed critical Choquard equations.}
J. Differential Equations, \textbf{263} (2017), 3943--3988.


\bibitem{CL}
\newblock W. Chen \& C. Li.
\newblock \emph{Classification of solutions of some nonlinear elliptic equations.}
\newblock Duke Math. J. \textbf{63}(1991), 615--622.

\bibitem{DY}
\newblock L. Du, M. Yang.
\newblock \emph{Uniqueness and nondegeneracy of solutions for a critical nonlocal equation.}
\newblock Discrete Contin. Dyn. Syst. A., \textbf{39(10)} (2019),  5847--5866.

\bibitem{GY}
\newblock F. Gao,  M. Yang.
\newblock \emph{The Brezis-Nirenberg type critical problem for nonlinear Choquard equation.}
\newblock Sci China Math, \textbf{61}(2018), 1219--1242

\bibitem{GY3}
\newblock F. Gao, M. Yang.
\newblock \emph{ A strongly indefinite Choquard equation with critical exponent due to Hardy--Littlewood--Sobolev inequality.}
\newblock Commun. Contemp. Math., \textbf{20}(2018), 1750037

\bibitem{A}
\newblock T. Aubin.
\newblock \emph{Best constants in the Sobolev imbedding theorem: the Yamabe problem.}
\newblock Ann. of Math. Stud. \textbf{115}(1989), 173--184.

\bibitem{CGS}
\newblock L. Caffarelli, B. Gidas \& J. Spruck.
\newblock \emph{Asymptotic symmetry and local behavior of semilinear elliptic equations with critical Sobolev growth.}
\newblock Comm. Pure Appl. Math. \textbf{42}(1989), 271--297.

\bibitem{CLO1}
\newblock W. Chen, C. Li \& B. Ou.
\newblock \emph{Classification of solutions for an integral equation.}
\newblock Comm. Pure Appl. Math. \textbf{59}(2006), 330--343.

\bibitem{Lan}
\newblock  N.S. Landkof.
\newblock \emph{Foundations of modern potential theory}.
\newblock translated by A. P. Doohovskoy, Grundlehren der mathematischen Wissenschaften, Springer, New York-Heidelberg (1972).


\bibitem{Len}
 \newblock E. Lenzmann,
 \newblock \emph{Uniqueness of ground states for pseudorelativistic Hartree equations},
   \newblock Anal. PDE \textbf{2} (2009), 1--27.

\bibitem{Lei}
\newblock Y. Lei.
\newblock \emph{Liouville theorems and classification results for a nonlocal schrodinger equation.}
\newblock  Discrete Contin. Dyn. Syst. A, 38(2018), 5351--5377.


\bibitem{LC}
\newblock C. Li.
\newblock \emph{Local asymptotic symmetry of singular solutions to nonlinear elliptic equations.}
\newblock Invent. Math. \textbf{123}(1996), 221--231.


\bibitem{Li}
\newblock  Y.Y. Li.
\newblock \emph{Remark on some conformally invariant integral equations: the method of moving spheres.}
\newblock  J. Eur. Math. Soc. \textbf{6}(2004), 153--180.

\bibitem{LE1}
\newblock E. Lieb.
\newblock \emph{Sharp constants in the Hardy-Littlewood-Sobolev and related inequalities.}
\newblock Ann. of Math. \textbf{118}(1983), 349--374.


\bibitem{LL}
\newblock E. Lieb \& M. Loss.
\newblock "Analysis".
\newblock \emph{Gradute Studies in Mathematics.} AMS.
Providence, Rhode island, 2001.



\bibitem{Liu}
\newblock  S. Liu.
\newblock \emph{Regularity, symmetry, and uniqueness of some integral type quasilinear equations.}
\newblock  Nonlinear Anal. \textbf{71}(2009), 1796--1806.


\bibitem{MS1}
\newblock V. Moroz \& J. Van Schaftingen,
 \newblock \emph{Groundstates of nonlinear Choquard equations: Existence, qualitative properties and decay asymptotics.}
\newblock J. Funct. Anal. \textbf{265}(2013), 153--184.


 \bibitem{R}
\newblock O. Rey.
\newblock \emph{The role of the Green's function in a nonlinear elliptic equation involving the critical Sobolev exponent.}
\newblock J. Funct. Anal. \textbf{89} (1990), 1--52.

\bibitem{SJ}
\newblock J. Seok.
\newblock \emph{Limit profiles and uniqueness of ground states to the nonlinear Choquard equations.}
\newblock Advances in Nonlinear Analysis. \textbf{8} (2019), 1083--1098.

\bibitem{St}
\newblock E. Stein,
\newblock \emph{Singular Integrals andDifferentiability Properties Of Functions.}
\newblock  PrincetonMathematical Series, vol. 30. Princeton University Press, Princeton (1970)

\bibitem{Ta}
\newblock G. Talenti.
 \newblock \emph{Best constant in Sobolev inequality.}
\newblock  Ann. Mat. Pura. Appl. \textbf{110} (1976), 353--372.

\bibitem{WW}
\newblock J. Wei \& M. Winter,
\newblock \emph{Strongly Interacting Bumps for the Schr\"odinger-Newton Equations},
\newblock J. Math. Phys. \textbf{50} (2009), 012905.

\bibitem{X}
\newblock C. Xiang,
\newblock \emph{Uniqueness and nondegeneracy of ground states for Choquard equations in three dimensions.}
\newblock Calc. Var. Partial Differential Equations 55 (2016), no. 6, Art.134, 25 pp. 	
\end{thebibliography}
\end{document}